\renewcommand{\cite}{\citet}
\def\d{\mathrm{d}}
\def\laweq{\buildrel \mathrm{d} \over =}
\newcommand{\E}{\mathbb{E}}
\newcommand{\R}{\mathbb{R}}
\newcommand{\p}{\mathbb{P}}
\newcommand{\J}{\mathcal{J}}
\newcommand{\id}{\mathds{1}}
\newcommand{\lcx}{\preceq_{\mathrm{cx}}}
\newcommand{\lv}{\preceq_{\mathrm{h}}}
\newcommand{\lh}{\preceq_{\mathrm{h}}}
\renewcommand{\ge}{\geqslant}
\renewcommand{\le}{\leqslant}
\renewcommand{\geq}{\geqslant}
\renewcommand{\leq}{\leqslant}
\renewcommand{\epsilon}{\varepsilon}
\renewcommand{\cdots}{\dots}
\theoremstyle{plain}
\newtheorem{theorem}{Theorem}
\newtheorem{cor}[theorem]{Corollary}
\newtheorem{lem}[theorem]{Lemma}
\newtheorem{prop}[theorem]{Proposition}
\theoremstyle{definition}
\newtheorem{defn}[theorem]{Definition}
\newtheorem{exam}[theorem]{Example}
\newtheorem{rem}[theorem]{Remark}
\numberwithin{equation}{section} \numberwithin{theorem}{section}
\begin{document}

\title{Distributional Compatibility for Change of Measures}

\author[1]{Jie Shen}
\author[1]{Yi Shen}
\author[2]{Bin Wang}
\author[1]{Ruodu Wang}
\affil[1]{\footnotesize Department of Statistics and Actuarial Science, University of Waterloo, Canada \normalsize}
\affil[2]{\footnotesize RCSDS, NCMIS, Academy of Mathematics and Systems Science, Chinese Academy of Sciences, China}


 \date{\today}
\maketitle

\begin{abstract}

In this paper, we characterize compatibility of distributions and probability measures on a measurable space.
For a set of indices $\mathcal J$, we say that the tuples of probability measures $(Q_i)_{i\in \J} $ and distributions $(F_i)_{i\in \J} $ are {compatible} if there exists a random variable  having distribution $F_i$ under $Q_i$ for each $i\in \mathcal J$.
We first establish an equivalent condition using conditional expectations for general (possibly uncountable) $\J$.
For a finite $n$, it turns out that compatibility of $(Q_1,\dots,Q_n)$ and $(F_1,\dots,F_n)$ depends on the heterogeneity among $Q_1,\dots,Q_n$ compared with that among $F_1,\dots,F_n$.
We show that, under an assumption that the measurable space is rich enough, $(Q_1,\dots,Q_n)$ and $(F_1,\dots,F_n)$
are compatible if and only if $(Q_1,\dots,Q_n)$ dominates $(F_1,\dots,F_n)$ in a notion of heterogeneity order,  defined via multivariate convex order between the Radon-Nikodym derivatives  of $(Q_1,\dots,Q_n)$ and $(F_1,\dots,F_n)$ with respect to  some  reference measures.
  We then proceed to generalize our results to stochastic processes, and conclude the paper with an application to portfolio selection problems under multiple constraints. 



\textbf{Keywords}:  change of measure, compatibility, heterogeneity order, optimization.
   \end{abstract}


\section{Introduction}

\subsection{The main problem}
Change of probability measures is  found ubiquitous in problems where multiple probability measures appear, with extensive theoretical treatment and applications in the fields of probability theory,  statistics, decision theory, simulation, and finance.

A key feature of a change of measure is that the distribution of a random variable is transformed to another one, and this serves many theoretical as well as practical purposes, such as in the modification of a Brownian motion drift (e.g.~\cite{RY13}) or in importance sampling (e.g.~\cite{S76,GL05}).
In view of this, a  question   seems natural to us: \emph{how much} would the distribution change?
We formulate this question below.\\

(A) Given two probability measures $P$ and $Q$ defined on the same measurable space $(\Omega,\mathcal A)$, suppose that a random variable $X:\Omega\to \R$ has a given distribution function $F$ under $P$.
 What are  the possible distributions of $X$ under $Q$?\\

Question (A) arises naturally if one has statistical (distributional) information about a random variable $X$ under $P$, but yet she is concerned about the behaviour of $X$ under another measure $Q$.
A general version of question (A), the vocal focus of this paper, is the following.\\

(B) Given several probability measures $Q_1,\dots,Q_n$ defined on $(\Omega,\mathcal A)$, and distribution measures $F_1,\dots,F_n$ on $\R$,
does there exist a random variable $X:\Omega\to \R$ such that $X$ has distribution $F_i$ under $Q_i$ for $i=1,\dots,n$?

 \tikzstyle{bag} = [text width=2em, text centered]
\tikzstyle{end} = []
\begin{center}
\begin{tikzpicture}[sloped]
        \node (a1) at ( 0,0) [bag] {$Q_1$};
                \node (a2) at ( 1,0) [bag] {$Q_2$};
                        \node (a3) at ( 2,0) [bag] {$Q_3$};
                                \node (a4) at ( 3,0) [bag] {$\cdots$};
                                        \node (a5) at ( 4,0) [bag] {$\cdots$};
                                                \node (a6) at ( 5,0) [bag] {$Q_n$};
                                \node (b) at ( 2.5,-2) [text width=3em, text height=1.5em, text centered, text depth=0.5em] {\Large$ {X}$};
                                \node (d) at ( 4.5,-2) [text width=5em, text centered] {exists?};
                                        \node (e) at ( 2.5,0.7) [text width=12em, text centered] { given probability measures};
                                                \node (f) at ( 2.5,-4.7) [text width=12em, text centered] {given distributions};
                                                                                        \node (g) at (5.2,-1.2) [text width=8em, text centered] { $F_i(\cdot)=Q_i(X\in \cdot)$};
                                    \node (c1) at ( 0,-4) [bag] {$F_1$};
                \node (c2) at ( 1,-4) [bag] {$F_2$};
                        \node (c3) at ( 2,-4) [bag] {$F_3$};
                                \node (c4) at ( 3,-4) [bag] {$\cdots$};
                                        \node (c5) at ( 4,-4) [bag] {$\cdots$};
                                                \node (c6) at ( 5,-4) [bag] {$F_n$};
        \draw  [->] (a1) to (b);
                \draw  [->] (a2) to (b);
                        \draw  [->] (a3) to (b);
                                \draw  [->] (a4) to (b);
                                        \draw  [->] (a5) to (b);
                                                \draw  [->] (a6) to (b);
                                    \draw  [->] (b) to (c1);
           \draw  [->] (b) to (c2);
            \draw  [->] (b) to (c3);
            \draw  [->] (b) to (c4);
            \draw  [->] (b) to (c5);
            \draw  [->] (b) to (c6);
            \end{tikzpicture}
\end{center}

Question (B) is henceforth referred to as the \emph{compatibility} problem  for the $n$-tuples of measures $(Q_1,\dots,Q_n)$ and $(F_1,\dots,F_n)$.
We give an analytical answer to question (B), and hence (A).
More generally, we also address the compatibility of two infinite collections of measures.

Before describing our findings, let us look at a few intuitive cases of (B).
 Suppose that $(Q_1,\dots,Q_n)$ and $(F_1,\dots,F_n)$ are compatible, that is, (B) has an affirmative answer.
 In case that $Q_1,\dots,Q_n$ are identical, it is clear that the  respective distributions  of a random variable under each $Q_i$, $i=1,\dots,n$ are the same; thus $F_1=\dots=F_n$.  In case that  $Q_1,\dots,Q_n$ are mutually singular, the respective distributions of a random variable under $Q_i$, $i=1,\dots,n$ can be arbitrary.
In case that $F_1,\dots,F_n$ are mutually singular measures on $(\R,\mathcal B(\R))$, $Q_1,\dots,Q_n$ have to be also mutually singular.
From the above observations, it then seems natural to us that whether $(Q_1,\dots,Q_n)$ and $(F_1,\dots,F_n)$ are compatible depends on the \emph{heterogeneity} (in some sense) among $Q_1,\dots,Q_n$ compared to that of $F_1,\dots,F_n$. More precisely,  $Q_1,\dots,Q_n$ need to be \emph{more heterogeneous} than $F_1,\dots,F_n$ to allow for compatibility.

To describe the above heterogeneity mathematically, we seek help from a notion of heterogeneity order. It turns out that compatibility of $(Q_1,\dots,Q_n)$ and $(F_1,\dots,F_n)$ is closely related to multivariate convex order between the Radon-Nikodym derivatives
$ (\frac{\d F_1}{\d F},\dots, \frac{\d F_n}{\d F} )$ and   $(\frac{\d Q_1}{\d Q},\dots,\frac{\d Q_n}{\d Q}) $, where $F$ and $Q$ are two ``reference probability measures" on $(\R,\mathcal B(\R))$ and $(\Omega,\mathcal A)$, respectively. In particular, we show that
question (B) has an affirmative answer only if for some measures $F$ dominating $(F_1,\dots,F_n)$ and $Q$  dominating $(Q_1,\dots,Q_n)$,
\begin{equation*}
\int_\R f\left(\frac{\d F_1}{\d F},\dots, \frac{\d F_n}{\d F} \right)\d F \le \int_\Omega f\left(\frac{\d Q_1}{\d Q},\dots,\frac{\d Q_n}{\d Q}\right) \d Q
\end{equation*}
for all convex functions $f:\R^n\to \R$.
Furthermore, if the measurable space $(\Omega,\mathcal A)$ is rich enough, the above necessary condition is sufficient for a positive answer to (B).
We then proceed to generalize our results to stochastic processes, and conclude the paper with  an optimization problem  related to compatibility of distributions under change of measures.



\subsection{Relation  to finance and economics}

The main objective of this paper, question (B), has several deep connections to fundamental problems in finance and economics. We summarize some notable relevant points below\footnote{We thank Marcel Nutz for suggesting the second and the third connections, and Fabio Maccheroni for helpful discussions leading to the fourth connection.}.

\textbf{Risk assessment under multiple scenarios.} In   the evaluation of capital requirement for market risks,  one often needs to assess risk models under different probability measures, e.g.~stressed and non-stressed scenarios.
The evaluation of a risk would then be a combination of distributions obtained under various scenarios.
For a theoretical treatment of this approach and its relation to the Fundamental Review of the Trading Book, we refer to \cite{WZ18}.
A natural question in this context is  whether one can find a risk model (represented by a random variable or a stochastic process) that has  specified distributions under corresponding scenarios.
For instance, one may be interested in simulating from a risk model which has a specific dynamic under a non-stressed scenario and
 another dynamic (e.g.~with different parameters) under a stressed scenario.
The existence of such a risk model is precisely question (B); see Section \ref{sec:girsa} for results on Brownian motions.
For some other questions in the same spirit,  we refer to \cite{EHW16, EMS02} where the authors address a few questions on the existence of certain models satisfying given constraints, which are  raised by practitioners from the financial industry.
In Section \ref{sec:5}, we present a portfolio selection problem with constraints under multiple scenarios.

 \textbf{Simultaneous mass transport.} By definition,  question (B) is equivalent to the existence of a Monge mass transport from $Q_i$ to $F_i$ for all $i=1,\dots,n$ {simultaneously}.
Optimal mass transport is an active topic with various applications in mathematical finance, in particular in the calculation of model-independent bounds;
we refer to  \cite{H11}, \cite{BHP13,BNT17} and \cite{BJ16} for recent advances.
In the study of optimal transport, one typically  looks at an optimal transport for one pair of measures.
The existence of such a transport is trivial for one pair of measures, which corresponds to question (B) for $n=1$.
In this paper, we deal with the case $n>1$, and thus \emph{simultaneous mass transport}. Existence is no longer a trivial issue, and it has to be studied before one could discuss optimality.
Admittedly, we are not aware of immediate applications of simultaneous mass transport in finance. Nevertheless, this paper serves as a starting point for future studies in this direction.

 \textbf{Task assignment problem.} The third connection is a classic task assignment problem in economics.
Suppose that $\Omega$ represents a finite set of workers.
Each worker $\omega\in \Omega$ has some resources of several skills, represented by real numbers $p_1(\omega),\dots,p_n(\omega)$.
One needs to assign workers to stations (represented by real numbers) where each station demands each skill at a specified amount, and each amount integrates to the corresponding total available resources $\int_\Omega p_i(\omega) \d \omega $, $i=1,\dots,n$.
The problem is whether there exists a way to assign the workers so that each station has exactly the amount of skills it demands.
The continuous version of this problem is precisely question (B), where $p_i$ represents the density of the probability measure $Q_i$, $i=1,\dots,n$.

 \textbf{Consequentialism in decision theory.} The fourth connection is found in decision theory.
An Anscombe-Aumann   act (\cite{AA63}) is a vector of distributions, resulting from a lottery (a random variable) under a set of beliefs (a collection of probability measures $Q_1,\dots,Q_n$).
Question (B) is equivalent to the existence of a given Anscombe-Aumann act for a pre-specified set of beliefs.
Decision theorists often study preferences over the set of all Anscombe-Aumann acts without specifying the measures $Q_1,\dots,Q_n$, and this is referred to as an axiom of \emph{consequentialism} (see e.g.~\cite{BCMM17}).
Such an approach assumes that any choice of an act always exists, which is guaranteed by assuming the mutual singularity of $(Q_1,\dots,Q_n)$
(Proposition \ref{prop:properties} (iv) and Theorem \ref{th:meas.variable}). However, mutual singularity is not the case for many parametric models of beliefs.
As such, the results in our paper are helpful to a better understanding of the decision-theoretical framework of consequentialism.

\subsection{Notation}

Throughout, we work with a fixed measurable space $(\Omega, \mathcal A)$, which allows for atomless probability measures.
A probability measure $Q$ on $(\Omega,\mathcal A)$ is said to be atomless if for all $A\in \mathcal A$ with $Q(A)>0$, there exists $B\in \mathcal A$, $B\subset A$ such that $0<Q(B)<Q(A)$. Equivalently, there exists a random variable in $(\Omega,\mathcal A)$ that is continuously distributed under $Q$.
Let $\mathcal F$ be the set of probability measures on $(\R,\mathcal B(\R))$, where $\mathcal B(\R)$ stands for the Borel $\sigma$-algebra of $\R$, and $\mathcal M_1=\mathcal M_1(\Omega)$ be the set of probability measures on $(\Omega, \mathcal A)$.
Let  $\mathcal L^0(\Omega; \mathcal Y)$ be the set of all measurable functions from $\Omega$ to $\mathcal Y$, when the corresponding $\sigma$-fields are clear.
For any measures $Q,Q_1,\dots,Q_n$, we say that $Q$ dominates $(Q_1,\dots,Q_n)$, denoted by $(Q_1,\dots,Q_n)\ll Q$,  if $Q$ dominates $Q_i$ for each $i=1,\dots,n$.

 \section{Compatibility and an equivalent condition}\label{sec:sec2}

We first define  the main concept of this paper,  compatibility problem for the two groups of measures $(Q_i)_{i\in\mathcal J}\subseteq \mathcal M_1$ and $(F_i)_{i\in\mathcal J}\subseteq \mathcal F$, where $\mathcal J$ is a possibly infinite set of indices.
\begin{defn}\label{def1}
$(Q_i)_{i\in\mathcal J}\subseteq \mathcal M_1$ and $(F_i)_{i\in\mathcal J}\subseteq \mathcal F$ are \emph{compatible} if there exists a random variable $X$ in $(\Omega,\mathcal A)$ such that
 $ F_i$ is the distribution of $X$ under $Q_i$  for each $i\in\mathcal J$.
\end{defn}

We note that  $ F $ is the distribution of $X$ under $Q$ if and only if $F=Q\circ X^{-1}$.
Below we establish our first result, which leads to an equivalent condition for compatibility of $(Q_i)_{i\in\mathcal J}\subseteq \mathcal M_1$ and $(F_i)_{i\in\mathcal J}\subseteq \mathcal F$.

\begin{theorem}\label{th:newstuff} For $(Q_i)_{i\in\mathcal J}\subseteq \mathcal M_1$, $(F_i)_{i\in\mathcal J}\subseteq \mathcal F$ and $X\in \mathcal L^0(\Omega;\mathbb R)$, assuming
 that  there exists a probability measure in $\mathcal M_1$ dominating $(Q_i)_{i\in \mathcal J} $,
 equivalent are:
\begin{enumerate}[(i)]
\item   $X$ has distribution $F_i$ under $Q_i$ for $i\in \mathcal J$.
\item  For all $Q\in \mathcal M_1$ dominating $(Q_i)_{i\in\mathcal J}$,  the probability measure $F=Q\circ X^{-1}$ dominates $(F_i)_{i\in\mathcal J}$, and for all $i\in\mathcal J$,
 \begin{equation}\label{eq:eq-new1}  \frac{\d F_i}{\d F}(X) =   \E^Q\left[\left.\frac{\d Q_i}{\d {Q}}\right|X\right].\end{equation}
 \item For some  $Q\in \mathcal M_1$ dominating $(Q_i)_{i\in\mathcal J}$,  the probability measure $F=Q\circ X^{-1}$ dominates $(F_i)_{i\in\mathcal J}$, and (\ref{eq:eq-new1}) holds.
 \end{enumerate}
\end{theorem}
\begin{proof}

 (i)$\Rightarrow$(ii):  By definition, $X$ is such that $Q_i(X\in A)=F_i(A)$ for $A\in \mathcal B(\R)$ and $i\in\mathcal J$. Let $Q\in\mathcal M_1$ such that $Q_i\ll Q$, $i\in\mathcal J$. 
        For any $A\in \mathcal B(\mathbb R)$, if $F(A)=0$, then $Q(X\in A)=0$. Since $Q_i\ll Q$, $Q_i(X\in A)=F_i(A)=0$, we have $F_i\ll F$ for $i\in\mathcal J$.
 We can verify that for any $A\in \mathcal B(\R)$ and $i\in\mathcal J$,
\begin{align*}\E^{Q}\left[\id_{\{X\in A\}}\frac{\d Q_i}{\d Q}\right]&=Q_i(X\in A)\\
&=F_i(A) =\int_A \frac{\d F_i}{\d F}\d F =\E^{Q}\left[\id_{\{X\in A\}}\frac{\d F_i}{\d F}(X)\right].
 \end{align*}
Therefore, $$\frac{\d F_i}{\d F}(X)= \E^{Q}\left[\left.\frac{\d Q_i}{\d Q}\right|X\right],~~i\in \mathcal J.$$

(ii)$\Rightarrow$(iii): Trivial.

(iii)$\Rightarrow$(i): Suppose that \eqref{eq:eq-new1} holds and $F$ dominates $(F_i)_{i\in \mathcal J}$. One can easily verify that, for all $A\in\mathcal B(\R)$ and $i\in\mathcal J$,
\begin{align*}
\E^{Q_i}[\id_{\{X\in A\}}]=\E^{Q}\left[\id_{\{X\in A\}}\frac{\d Q_i}{\d Q}\right]&=\E^Q\left[\E^{Q}\left[\left.\id_{\{X\in A\}}\frac{\d Q_i}{\d Q}\right|X\right]\right]
\\&=\E^Q\left[\id_{\{X\in A\}}\E^{Q}\left[\left.\frac{\d Q_i}{\d Q}\right|X\right]\right]
\\&=\E^Q\left[\id_{\{X\in A\}} \frac{\d F_i}{\d F} (X)\right] =F_i(A).
\end{align*}
Therefore, $X$ has distribution $F_i$ under $Q_i$, $i\in\mathcal J$, thus $(Q_i)_{i\in\mathcal J}$ and $(F_i)_{i\in\mathcal J}$ are compatible.
\end{proof}

\begin{rem}\label{rem:21}
In the case where the index set $\mathcal J=\{1,\dots, n\}$ is finite, a probability measure $Q\in \mathcal M_1$ dominating $(Q_1,\dots, Q_n)$ always exists, as we can take, for example, $Q=\frac{1}{n}(Q_1+\cdots+Q_n)$. As such, the existence assumption in Theorem \ref{th:newstuff} can be removed when $\mathcal J$ is finite.
\end{rem}

From Theorem \ref{th:newstuff}, the necessary and sufficient condition of compatibility is the existence of $X\in \mathcal L^0(\Omega;\mathbb R)$ satisfying \eqref{eq:eq-new1} for some $Q\in \mathcal M_1$ dominating $(Q_i)_{i\in\mathcal J}$. This condition is not easy to verify in general. In the next sections we explore necessary and sufficient conditions, much easier to verify, based on  distributional properties of the random vectors $(\frac{\d F_1}{\d F},\dots, \frac{\d F_n}{\d F} )$ and $(\frac{\d Q_1}{\d {Q}},\dots,\frac{\d Q_n}{\d {Q}})$, where $F$ and $Q$ are some measures dominating $(F_1,\dots,F_n)$ and $(Q_1,\dots,Q_n)$ respectively.
%

\begin{rem}\label{cor:cor2}
In the special case of $n=2$ and  $Q_1\ll Q_2$, one can take $Q=Q_2$ in Theorem \ref{th:newstuff}, and the two-dimensional equality in \eqref{eq:eq-new1} reduces to a one-dimensional equality
$$ \frac{\d F_1}{\d F_2}(X) =   \E^{Q_2}\left[\left.\frac{\d Q_1}{\d {Q_2}}\right|X\right].$$
\end{rem}

\section{Characterizing compatibility via heterogeneity order}\label{sec:sec3}

In this section, we explore analytical conditions for compatibility of $(Q_1,\dots,Q_n)$ and $(F_1,\dots,F_n)$ based on their Radon-Nikodym derivatives with respect to some reference probability measures, which are much easier to verify than Theorem \ref{th:newstuff}.

 \subsection{Preliminaries on convex order}

For an arbitrary probability space $(\Gamma, \mathcal S,P)$, denote by $\mathcal L^1(\Gamma; \mathbb R^n)$  the set of all integrable $n$-dimensional random vectors defined on $(\Gamma, \mathcal S,P)$.
Multivariate convex order is a natural notion of heterogeneity order, as  defined below.

\begin{defn}[Convex order]
Let $(\Omega_1,\mathcal A_1, P_1)$ and $(\Omega_2,\mathcal A_2,P_2)$ be two  probability spaces.
For $\mathbf  X\in\mathcal L^1(\Omega_1; \mathbb R^n)$ and  $\mathbf Y\in \mathcal L^1(\Omega_2; \mathbb R^n)$,
we write $\mathbf X|_{P_1}\preceq_{\rm cx} \mathbf Y|_{P_2}$, if $\E^{P_1}[f(\mathbf X)]\le \E^{P_2}[f(\mathbf Y)]$ for all convex functions $f:\R^n\to \R$.
\end{defn}

For more on multi-dimensional convex order, we refer to \citet[Chapter 3]{MS02} and \citet[Chapter 7]{SS07}.


For $\mathbf  X\in \mathcal L^1(\Omega_1; \mathbb R^n)$ and  $\mathbf Y\in \mathcal L^1(\Omega_2; \mathbb R^n)$, we use $\mathbf X|_{P_1}\laweq \mathbf Y|_{P_2}$ to represent that $\mathbf X$ and $\mathbf Y$ have the same distribution under $P_1$ and $P_2$ respectively.
Clearly, if $\mathbf X|_{P_1}\laweq \mathbf Y|_{P_2}$, then  $\mathbf X|_{P_1}\preceq_{\rm cx} \mathbf Y|_{P_2}$ and  $\mathbf Y|_{P_2}\preceq_{\rm cx} \mathbf X|_{P_1}$.
A key feature of convex order is its connection to conditional expectations. Below in Lemma \ref{lem:lem2} we quote Theorem 7.A.1 of \cite{SS07} for this well-known result (an extension of Strassen's theorem, \cite{S65}); one also finds a slightly simpler formulation as Theorem 3.4.2 of \cite{MS02}. See also \cite{HPRY11} for a construction similar to Lemma  \ref{lem:lem2} for stochastic processes (termed \emph{peacocks}).
\begin{lem} \label{lem:lem2}
For $\mathbf  X\in \mathcal L^1(\Omega_1; \mathbb R^n)$ and  $\mathbf Y\in \mathcal L^1(\Omega_2; \mathbb R^n)$,  $\mathbf X|_{P_1}\preceq_{\rm cx} \mathbf Y|_{P_2}$
if and only if there exist a probability space $(\Omega_3,\mathcal A_3, P_3)$ and $\mathbf X',\mathbf Y'\in \mathcal L^1(\Omega_3; \mathbb R^n)$  such that
$\mathbf X'|_{P_3}\laweq \mathbf X|_{P_1}$, $\mathbf Y'|_{P_3}\laweq \mathbf Y|_{P_2}$,
and $\E^{P_3}[\mathbf Y'|\mathbf X']=\mathbf X'$.
\end{lem}

\subsection{Heterogeneity order}\label{sec:sec32}

As mentioned in the introduction, compatibility intuitively concerns the heterogeneity among $(Q_1,\dots,Q_n)$ compared to $(F_1,\dots,F_n)$.
The following lemma, based on Theorem \ref{th:newstuff}, yields a possible way of characterizing the comparison between the two tuples of measures.
More precisely, a necessary condition for compatibility is built on a convex order relation between the random vectors $(\frac{\d F_1}{\d F},\dots, \frac{\d F_n}{\d F} )$ and $(\frac{\d Q_1}{\d {Q}},\dots,\frac{\d Q_n}{\d {Q}})$ for some reference probability measures $F\in\mathcal F$ and $Q\in \mathcal M_1$.


 \begin{lem}\label{lem}
  If $(Q_1,\dots,Q_n)\in \mathcal M_1^n$ and $(F_1,\dots,F_n)\in \mathcal F^n$ are compatible, then
for any $Q\in \mathcal M_1$ dominating $(Q_1,\dots,Q_n)$, there exists $F\in \mathcal F$ dominating $(F_1,\dots,F_n)$, such that
 \begin{equation}\label{eq:eq1}
\left.\left(\frac{\d F_1}{\d F},\dots, \frac{\d F_n}{\d F} \right)\right|_F \lcx \left. \left(\frac{\d Q_1}{\d Q},\dots,\frac{\d Q_n}{\d Q}\right)\right|_Q.
\end{equation}
Moreover,  $F$ in \eqref{eq:eq1} can be taken as $Q\circ X^{-1}$, where $X$ is a random variable with distribution $F_i$ under $Q_i$, $i=1,\dots,n$.

\end{lem}

\begin{proof}
This lemma is directly obtained from Theorem \ref{th:newstuff} and Lemma \ref{lem:lem2}. More precisely, by Theorem \ref{th:newstuff}, there exists $X\in \mathcal L^0(\Omega;\mathbb R)$ such that  $$  \left(\frac{\d F_1}{\d F},\dots, \frac{\d F_n}{\d F} \right)(X) =   \E^Q\left[\left.\left(\frac{\d Q_1}{\d {Q}},\dots,\frac{\d Q_n}{\d {Q}}\right)\right|X\right]$$  where $F=Q\circ X^{-1}$.
Therefore,
$$
\left.\left(\frac{\d F_1}{\d F},\dots, \frac{\d F_n}{\d F} \right)\right|_{F} \laweq \left. \E^Q\left[\left.\left(\frac{\d Q_1}{\d {Q}},\dots,\frac{\d Q_n}{\d {Q}}\right)\right|X\right]\right|_{Q} \lcx \left. \left(\frac{\d Q_1}{\d Q},\dots,\frac{\d Q_n}{\d {Q}}\right)\right|_{Q},
$$
where the last inequality follows from Lemma \ref{lem:lem2} by taking
$$
\mathbf X'=\E^Q\left[\left.\left(\frac{\d Q_1}{\d {Q}},\dots,\frac{\d Q_n}{\d {Q}}\right)\right|X\right], \mathbf Y'=\left(\frac{\d Q_1}{\d Q},\dots,\frac{\d Q_n}{\d {Q}}\right), P_3=Q.
$$
\end{proof}


 We summarize  the necessary condition in Lemma \ref{lem} for compatibility   by introducing the following heterogeneity order, which is shown to be a partial order in Lemma \ref{lem:lem33} below.
In the following, $\mathcal M_1(\Omega_1)$ and $\mathcal M_1(\Omega_2)$ represent the sets of probability measures on two arbitrary measurable spaces $\Omega_1$ and $\Omega_2$, respectively.
 \begin{defn}\label{def2}
\emph{$(P_1,\dots,P_n)\in \mathcal M_1^n(\Omega_1)$ is dominated by $(Q_1,\dots,Q_n)$ $\in \mathcal M_1^n(\Omega_2)$ in heterogeneity}, denoted by $(P_1,\dots,P_n)\lv  (Q_1,\dots,Q_n)$, if
\begin{equation}\label{eq:question}
\left.\left(\frac{\d P_1}{\d P},\dots, \frac{\d P_n}{\d P} \right)\right|_P \lcx \left. \left(\frac{\d Q_1}{\d Q},\dots,\frac{\d Q_n}{\d Q}\right)\right|_Q
\end{equation}
for some $P\in \mathcal M_1(\Omega_1)$ dominating $(P_1,\dots,P_n)$ and  $Q\in \mathcal M_1(\Omega_2)$ dominating $(Q_1,\dots,Q_n)$.
\end{defn}

Using the language of heterogeneity order, Lemma \ref{lem} says that in order for compatibility of $(Q_1,\dots,Q_n)\in \mathcal M_1^n$ and $(F_1,\dots,F_n)\in\mathcal F^n$, a necessary condition is $(F_1,\dots,F_n)\lv  (Q_1,\dots,Q_n)$. Before  discussing the sufficiency of this condition, we first establish some properties of heterogeneity order.

The following lemma implies that the choice of the reference measures $P$ and $Q$  in \eqref{eq:question} is irrelevant; in fact, they can be conveniently chosen as the averages of the corresponding measures.
\begin{lem}\label{lem:lem33}
For $(P_1,\dots,P_n)\in \mathcal M_1^n(\Omega_1)$ and $(Q_1,\dots,Q_n)\in \mathcal M_1^n(\Omega_2)$, let $\mathcal M^*_1(\Omega_1)=\{P\in \mathcal M_1(\Omega_1):  (P_1,\dots,P_n)\ll P\}$
and $\mathcal M^*_1(\Omega_2)=\{Q\in \mathcal M_1(\Omega_2): (Q_1,\dots,Q_n)\ll Q\}.$
The following are equivalent:
\begin{enumerate}[(i)]
\item  $(P_1,\dots,P_n)\lv  (Q_1,\dots,Q_n)$; that is, \eqref{eq:question} holds for some $P\in \mathcal M^*_1(\Omega_1)$ and $Q\in \mathcal M^*_1(\Omega_2)$.
\item For  $P=\frac{1}{n}\sum_{i=1}^n P_i$ and $Q=\frac{1}{n}\sum_{i=1}^n Q_i$, \eqref{eq:question} holds.
\item For any $Q\in \mathcal M^*_1(\Omega_2)$, there exists $P\in \mathcal M^*_1(\Omega_1)$ such that
\eqref{eq:question} holds.
\end{enumerate}
\end{lem}

\begin{proof} We proceed in the order (iii)$\Rightarrow$(ii)$\Rightarrow$(i)$\Rightarrow$(iii).

(iii)$\Rightarrow$(ii): For $Q=\frac{1}{n}\sum_{i=1}^n Q_i$, there exists $P^*\in \mathcal M^*_1(\Omega_1)$ such that
$$
\left.\left(\frac{\d P_1}{\d P^*},\dots, \frac{\d P_n}{\d P^*} \right)\right|_{P^*} \lcx \left. \left(\frac{\d Q_1}{\d Q},\dots,\frac{\d Q_n}{\d Q}\right)\right|_Q.
$$

Take the convex function $f:\R^n \to \R$, $f(x_1,\dots,x_n)=(x_1+\cdots+x_n)^2$. It follows from the definition of convex order that
\begin{align*}
\E^{P^*}\left[\left(\frac{\d P_1}{\d P^*}+\dots+\frac{\d P_n}{\d P^*} \right)^2 \right]&\le \E^Q\left[\left(\frac{\d Q_1}{\d Q}+\dots+\frac{\d Q_n}{\d Q}\right)^2\right]\\
&=\E^Q[n^2]=n^2.
\end{align*}

On the other hand,
$$\E^{P^*}\left[ \frac{\d P_1}{\d P^*}+\dots+\frac{\d P_n}{\d P^*} \right]=\E^{P_1}[1]+\dots+\E^{P_n}[1] =n.$$
Hence, $ \frac{\d P_1}{\d P^*}+\dots+\frac{\d P_n}{\d P^*} $ has zero variance under $P^*$,  which implies that it is $P^*$-almost surely equal to $n$.
In other words, $P^*=\frac{1}{n}\sum_{i=1}^n P_i$ on all sets with positive $P^*$-measure. Noting that moreover $P^*$ dominates $(P_1,\dots,P_n)$, we must have $P^*=\frac{1}{n}\sum_{i=1}^n P_i$.
Therefore,  \eqref{eq:question} holds for  $P=\frac{1}{n}\sum_{i=1}^n P_i$ and $Q=\frac{1}{n}\sum_{i=1}^n Q_i$.

(ii)$\Rightarrow$(i): trivial.

(i)$\Rightarrow$(iii):
Assume \eqref{eq:question} holds for some $Q\in \mathcal M_1^*(\Omega_2)$  and $P\in \mathcal M^*_1(\Omega_1)$. Let $\mathbf Y=(\frac{\d Q_1}{\d Q},\dots,\frac{\d Q_n}{\d Q})$, $\mathbf Z=(\frac{\d P_1}{\d P},\dots,\frac{\d P_n}{\d P})$. Let $Q'$ be another probability measure in $\mathcal M_1^*(\Omega_2)$. First, note that without loss of generality, we can assume that $Q'$ is dominated by $Q$. Indeed, any general probability measure $Q'$ can be decomposed as $Q'=cQ_{a}'+(1-c)Q_s'$, where $c\in[0,1]$, $Q'_a$ and $Q'_s$ are probability measures being absolutely continuous and singular with respect to $Q$, respectively. This implies that the distribution of $(\frac{\d Q_1}{\d Q'},\dots,\frac{\d Q_n}{\d Q'})$ is a mixture of the distribution of $c^{-1}(\frac{\d Q_1}{\d Q'_{a}},\dots,\frac{\d Q_n}{\d Q'_{a}})$ (with probability $c$) and $(0,\dots,0)$ (with probability $1-c$). It is easy to check that such a distribution has a larger convex order than $(\frac{\d Q_1}{\d Q'_a},\dots,\frac{\d Q_n}{\d Q'_a})$. Thus, if we show \eqref{eq:question} for $Q'_a$, the result also holds for $Q'$. In the sequel we assume $Q'$ is dominated by $Q$, hence the random variable $X=\frac{\d Q'}{\d Q}$ is well-defined. Let a set $A=\{\mathbf Y \neq 0\}$. Note that since $Q'$ dominates $(Q_1,\dots,Q_n)$, $X>0$ $Q$-almost surely on $A$. $(\frac{\d Q_1}{\d Q'},\dots,\frac{\d Q_n}{\d Q'})$ can be then taken as $X^{-1}\mathbf Y$, where we define $X^{-1}\mathbf Y=0$ when both $X$ and $\mathbf Y$ are 0.

By Lemma \ref{lem:lem2}, there exists a probability space $(\Omega', \mathcal A', \eta)$ and random vectors $\mathbf Y', \mathbf Z'$, such that $\mathbf Y'|_\eta\laweq \mathbf Y|_Q$, $\mathbf Z'|_\eta\laweq \mathbf Z|_P$, and $\E^\eta[\mathbf Y'|\mathbf Z']=\mathbf Z'$. Furthermore, we can obviously choose $(\Omega', \mathcal A', \eta)$ to contain a random variable $X'$ such that $(X',\mathbf Y')|_\eta\laweq (X, \mathbf Y)|_Q$. On $(\Omega', \mathcal A')$, define a new probability measure $\eta'$ by $\frac{\d \eta'}{\d \eta}=X'$, then $(X',\mathbf Y')|_{\eta'}\laweq (X, \mathbf Y)|_{Q'}$. For any bounded measurable function $f$,
$$
\E^\eta[f(\mathbf Z')\mathbf Z']=\E^{\eta}[f(\mathbf Z')\mathbf Y']=\E^{\eta}\left[f(\mathbf Z')\left(\frac{\mathbf Y'}{X'}\right)X'\right]=\E^{\eta'}\left[f(\mathbf Z')\left(\frac{\mathbf Y'}{X'}\right)\right],
$$
where, again, $X'=0$ implies $\mathbf Y'=0$, and in this case $\frac{\mathbf Y'}{X'}$ is set to be 0.
Hence
\begin{align*}
~\E^\eta[f(\mathbf Z')\mathbf Z']&=\E^{\eta'}\left[f(\mathbf Z')\E^{\eta'}\left[\left.\frac{\mathbf Y'}{X'}\right|\mathbf Z'\right]\right]\\
&=\E^\eta\left[f(\mathbf Z')\E^{\eta'}\left[\left.\frac{\mathbf Y'}{X'}\right|\mathbf Z'\right]X'\right]\\
&=\E^\eta\left[f(\mathbf Z')\E^{\eta'}\left[\left.\frac{\mathbf Y'}{X'}\right|\mathbf Z'\right]\E^\eta[X'|\mathbf Z']\right].
\end{align*}

Therefore, we must have
$$
\E^{\eta'}\left[\left.\frac{\mathbf Y'}{X'}\right|\mathbf Z'\right]=\frac{\mathbf Z'}{\E^\eta[X'|\mathbf Z']}
$$
$\eta$-almost surely.
Define measure $P'$ by $\frac{\d P'}{\d P}(z)=\E^\eta[X'|\mathbf Z'=\mathbf Z(z)]=:V(z)$. Note that since
\begin{align*}
\int \frac{\d P'}{\d P}(z)\d P(z)&=\int \E^\eta[X'|\mathbf Z'=\mathbf Z(z)] \d P(z)\\
&=\E^\eta\left[\E^\eta[X'|\mathbf Z']\right]=\E^\eta[X']=\E^Q[X]=1,
\end{align*}
$P'$ is a probability measure. Then we have $(\frac{\d P_1}{\d P'},\dots,\frac{\d P_n}{\d P'})=\frac{\mathbf Z}{V}$. Define probability measure $\eta''$ by $\frac{\d \eta''}{\d \eta}=\E^\eta[X'|\mathbf Z']$. Since the relation between $\mathbf Z'$, $\eta$ and $\eta''$ is in parallel with that between $\mathbf Z$, $P$ and $P'$, we have
$$
\left.\frac{\mathbf Z}{V}\right|_{P'}\laweq \left.\frac{\mathbf Z'}{\E^\eta[X'|\mathbf Z']}\right|_{\eta''}.
$$

However, for any test function $g$,
\begin{align*}
\E^{\eta''}[g(\mathbf Z')]&=\int g(\mathbf Z')\frac{\d \eta''}{\d \eta}\d \eta\\
&=\int g(\mathbf Z')\E^\eta[X'|\mathbf Z']\d \eta=\E^{\eta}[g(\mathbf Z')X']=\E^{\eta'}(g(\mathbf Z')),
\end{align*}
hence $\mathbf Z'|_{\eta'}\laweq \mathbf Z'|_{\eta''}$. Thus, $\frac{\mathbf Z'}{\E^\eta[X'|\mathbf Z']}$, as a function of $\mathbf Z'$, also has the same distribution under $\eta'$ and $\eta''$. Consequently, we have
$$
\left.\left(\frac{\d P_1}{\d P'},\dots,\frac{\d P_n}{\d P'}\right)\right|_{P'}=\left.\frac{\mathbf Z}{V}\right|_{P'}\laweq \left.\frac{\mathbf Z'}{\E^\eta[X'|\mathbf Z']}\right|_{\eta'}.
$$

Also, recalling that $(X',\mathbf Y')|_{\eta'}\laweq (X, \mathbf Y)|_{Q'}$,
$$
\left.\left(\frac{\d Q_1}{\d Q'},\dots,\frac{\d Q_n}{\d Q'}\right)\right|_{Q'}=\left.\frac{\mathbf Y}{X}\right|_{Q'}\laweq \left.\frac{\mathbf Y'}{X'}\right|_{\eta'}.
$$

The proof is finished by noting that
\begin{align*}
&~\E^{\eta'}\left[\left.\frac{\mathbf Y'}{X'}\right|\frac{\mathbf Z'}{\E^\eta[X'|\mathbf Z']}\right]\\
=&~\E^{\eta'}\left[\left.\E^{\eta'}\left[\left.\frac{\mathbf Y'}{X'}\right|\mathbf Z'\right]\right|\frac{\mathbf Z'}{\E^\eta[X'|\mathbf Z']}\right]\\
=&~\frac{\mathbf Z'}{\E^\eta[X'|\mathbf Z']},
\end{align*}
and applying Lemma \ref{lem:lem2} with random vectors $\frac{\mathbf Z'}{\E^\eta[X'|\mathbf Z']}$, $\frac{\mathbf Y'}{X'}$ and measure $\eta'$.
\end{proof}

\begin{rem}
While the definition of  heterogeneity order is given simply by using the convex order between the Radon-Nikodym derivatives of the measures, Lemma \ref{lem:lem33} shows that the choice of the reference measures, hence the exact form of the Radon-Nikodym derivatives, does not affect the order. This explains our motivation to introduce the notion of heterogeneity order as a partial order between two groups of measures rather than between two groups of random variables.
\end{rem}

Some simple and intuitive properties of heterogeneity order are summarized in the following proposition. These properties justify the term ``heterogeneity" in the order $\lh$.

\begin{prop}\label{prop:properties}
For $(P_1,\dots,P_n)\in \mathcal M_1^n(\Omega_1)$ and $(Q_1,\dots,Q_n)\in \mathcal M_1^n(\Omega_2)$, the following holds.
\begin{enumerate}[(i)]
\item If $P_1,\dots,P_n$ are identical, then $(P_1,\dots,P_n)\lv  (Q_1,\dots,Q_n)$.
\item If $Q_1,\dots,Q_n$ are identical, and $(P_1,\dots,P_n)\lv  (Q_1,\dots,Q_n)$, then $P_1$, $\dots$, $P_n$ are also identical.
\item If $Q_1,\dots,Q_n$ are equivalent, and $(P_1,\dots,P_n)\lv  (Q_1,\dots,Q_n)$, then $ P_1,\dots,P_n $ are also equivalent.
\item If $Q_1,\dots,Q_n$ are mutually singular, then $(P_1,\dots,P_n)\lv  (Q_1,\dots,Q_n)$.
 \item If $P_1,\dots,P_n$ are  mutually singular, and $(P_1,\dots,P_n)\lv  (Q_1,\dots,Q_n)$, then $ Q_1,\dots,Q_n $ are also mutually singular.
\end{enumerate}
\end{prop}

\begin{proof}
(i) It is straightforward to verify that
\begin{equation*}
\left.\left(\frac{\d P_1}{\d P_1},\dots, \frac{\d P_n}{\d P_1} \right)\right|_{P_1}\laweq \left.\left(1,\dots,1\right)\right|_{P_1} \lcx \left. \left(\frac{\d Q_1}{\d Q},\dots,\frac{\d Q_n}{\d Q}\right)\right|_Q
\end{equation*}
for any $Q\in \mathcal M_1(\Omega_2)$ that dominates $(Q_1,\dots,Q_n)$. As a result, we have $(P_1,\dots,P_n)\lv  (Q_1,\dots,Q_n)$.

(ii) By  $(P_1,\dots,P_n)\lv  (Q_1,\dots,Q_n)$ and Lemma \ref{lem:lem33}, we have
\begin{equation}\label{eq:h-o-2}
\left.\left(\frac{\d P_1}{\d P},\dots, \frac{\d P_n}{\d P} \right)\right|_P \lcx \left. \left(1,\dots,1\right)\right|_{Q_1}
\end{equation}
holds  for some $P\in \mathcal M_1(\Omega_1)$ dominating $(P_1,\dots,P_n)$.
By Lemma \ref{lem:lem2}, \eqref{eq:h-o-2}  further implies $\d P_i/\d P =1$ $P$-almost surely for $i=1,\dots,n$;
thus $P_1,\dots,P_n$ are identical.

(iii) Let $P=\frac{1}{n}\sum_{i=1}^n P_i$ and $Q= \frac{1}{n}\sum_{i=1}^n Q_i$.
$(P_1,\dots,P_n)\lv  (Q_1,\dots,Q_n)$ implies that,  for each $i=1,\dots,n$,
$$\frac{\d P_i}{\d P}\Big|_P\lcx \frac{\d Q_i}{\d Q}\Big|_Q.$$
Note that $Q(\d Q_i/\d Q=0)=0$ as $Q_1,\dots,Q_n$ are equivalent. By Lemma \ref{lem:lem2}, we know  $P(\d P_i/\d P=0)=0$, which implies $P\ll P_i$.
Thus, $P_1,\dots,P_n$ are equivalent.

 (iv) As $Q_1,\dots,Q_n$ are mutually singular,  there exists a partition $\{\Omega_1,\dots,\Omega_n\}$ $\subseteq$ $\mathcal A$ of $\Omega$ such that $Q_i(\Omega_i)=1$, $i=1,\dots,n$.
Let $P=\frac{1}{n}\sum_{i=1}^n P_i$ and $Q= \frac{1}{n}\sum_{i=1}^n Q_i$.
 Note that
 $$ \left(\frac{\d Q_1}{\d Q},\dots,\frac{\d Q_n}{\d Q}\right)= n\times \left(\id_{\Omega_1},\dots,\id_{\Omega_n}\right)$$
 takes values in the vertices of the simplex
 $$
 S=\left\{(s_1,\dots,s_n)\in \R_+^n: \sum_{i=1}^n s_i=n\right\},
 $$
 and $(\frac{\d P_1}{\d P},\dots, \frac{\d P_n}{\d P} )$  takes values in $S$.
 Furthermore,
 $$
 \E^P\left[\left(\frac{\d P_1}{\d P},\dots, \frac{\d P_n}{\d P} \right)\right]=(1,\dots,1)=\E^Q\left[\left(\frac{\d Q_1}{\d Q},\dots,\frac{\d Q_n}{\d Q} \right)\right].
 $$
 By the Choquet-Meyer Theorem (\cite{CM63}; see Section 10 of \cite{P01}), stating that among random vectors distributed in a simplex, the maximal elements with respect to convex order are supported over the vertices of the simplex,   we have
 $$\left.\left(\frac{\d P_1}{\d P},\dots, \frac{\d P_n}{\d P} \right)\right|_P \lcx \left. \left(\frac{\d Q_1}{\d Q},\dots,\frac{\d Q_n}{\d Q}\right)\right|_Q.
$$

 (v) Using the notation in (iv),
 $(\frac{\d P_1}{\d P},\dots, \frac{\d P_n}{\d P} )$
 takes values in the vertices of the simplex $S$,
 and $(\frac{\d Q_1}{\d Q},\dots, \frac{\d Q_n}{\d Q} )$  takes values in $S$.
 Therefore, by the Choquet-Meyer Theorem again, in order for $(P_1,\dots,P_n)\lh (Q_1,\dots,Q_n)$ to hold,
$(\frac{\d Q_1}{\d Q},\dots, \frac{\d Q_n}{\d Q} )$ has to be distributed over the vertices of the simplex $S$, and therefore, $Q_1,\dots,Q_n$ are mutually singular.
\end{proof}

\subsection{Almost compatibility}\label{sec:sec33}

In Section \ref{sec:sec32}, we see that  a necessary condition for compatibility of $(Q_1,\dots,Q_n)$ $\in$ $\mathcal M_1^n$ and $(F_1,\dots,F_n)\in\mathcal F^n$ is $(F_1,\dots,F_n)\lv  (Q_1,\dots,Q_n)$.  A natural question is whether (and with what additional assumptions) the above condition is also sufficient. This boils down (via Theorem \ref{th:newstuff}) to the question of, given
$$
\left.\left(\frac{\d F_1}{\d F},\dots, \frac{\d F_n}{\d F} \right)\right|_{F} \lcx \left. \left(\frac{\d Q_1}{\d Q},\dots,\frac{\d Q_n}{\d Q}\right)\right|_Q,
$$
where $F=\frac 1n\sum_{i=1}^n F_i$ and $Q=\frac 1n\sum_{i=1}^n Q_i$,
constructing a random variable  $X$ with distribution $F$ under $Q$  such that
\begin{equation}\label{eq:existx}
 \left(\frac{\d F_1}{\d F},\dots, \frac{\d F_n}{\d F} \right)(X) =   \E^Q\left[\left.\left(\frac{\d Q_1}{\d {Q}},\dots,\frac{\d Q_n}{\d {Q}}\right)\right|X\right].
\end{equation}

Such problem is similar to Lemma \ref{lem:lem2}, and more generally, the martingale construction in  \cite{S65} or \cite{HPRY11}, albeit we need to construct $X$ in the pre-specified space
$(\Omega,\mathcal A,Q)$.
Therefore, the existence of $X$ satisfying \eqref{eq:existx} naturally depends on the probability space $(\Omega,\mathcal A,Q)$.
As a simple example, if $F$ is a continuous distribution and one of $Q_1,\dots,Q_n$ is not atomless, then there does not exist a random variable $X$ with distribution $F$ under each of $Q_1,\dots,Q_n$, although $(F,\dots,F)\lh (Q_1,\dots,Q_n)$ by Proposition \ref{prop:properties} (i).

It seems then natural to assume that each of $Q_1,\dots,Q_n$ is atomless.
  Below we give a counter example showing that this condition is still insufficient.

 \begin{exam}\label{ex:ct1}
Let $\Omega=[0,1]$, $\mathcal A=\mathcal B([0,1])$, $Q_2=\lambda$ be the Lebesgue measure,  $\frac{\d Q_1}{\d Q_2}(t)=2t,$ $t\in [0,1]$,   $F_2=\lambda$ on $[0,1]$ and $\frac{\d F_1}{\d F_2}(x)=|4x-2|,$ $x\in [0,1]$.
For this setting we have $(F_1,F_2)\lv (Q_1,Q_2)$ but $(F_1,F_2)$ and $(Q_1,Q_2)$ are not compatible. The details of these statements are given in Appendix \ref{app:a1}.
\end{exam}

Example \ref{ex:ct1} suggests that the atomless condition, combined with the heterogeneity order $(F_1,\dots,F_n)\lv  (Q_1,\dots,Q_n)$,
is not sufficient for compatibility of  $ (Q_1,\dots,Q_n)$ and $(F_1,\dots,F_n) $.
Nevertheless, in this section we show that, assuming $Q_1,\dots,Q_n$ are atomless, $(F_1,\dots,F_n)\lv  (Q_1,\dots,Q_n)$
 is sufficient for \emph{almost compatibility}, a weaker notion than compatibility, which we introduce below.
 Denote by $D_{\text{KL}}(\cdot\Vert\cdot)$ the Kullback-Leibler divergence between probability measures. Recall that $D_{\text{KL}}(P\Vert Q)$ is defined as $\int \log(\d P/\d Q)\d P$ for $P\ll Q$.

%
\begin{defn}\label{def:almost}
$(Q_1,\dots,Q_n)\in \mathcal M_1^n$ and $(F_1, \dots, F_n)\in \mathcal F^n$ are \emph{almost compatible}, if for any $\epsilon>0$, there exists a random variable $X_\epsilon$ in $(\Omega,\mathcal A)$ such that
for each $i=1,\dots,n$, the distribution of $X_\epsilon$ under $Q_i$, denoted by $ F_{i,\epsilon}$, is absolutely continuous with respect to $F_i$, and satisfies $D_{\text{KL}}(F_{i,\epsilon}\Vert F_{i})<\epsilon$.
\end{defn}

The following theorem characterizes almost compatibility via heterogeneity order in Definition \ref{def2},
assuming each of $Q_1,\dots,Q_n$ is atomless.

 \begin{theorem}\label{th:almost:compatible}
 Suppose that $(Q_1,\dots,Q_n)\in \mathcal M_1^n$,  $(F_1,\dots,F_n)\in \mathcal F^n$ and each of $Q_1,\dots,Q_n$ is atomless. $(Q_1,\dots,Q_n)$ and $(F_1,\dots,F_n)$ are almost compatible if and only if $(F_1,\dots,F_n)\lv  (Q_1,\dots,Q_n)$.
\end{theorem}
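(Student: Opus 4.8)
### Proof proposal

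The plan is to prove the two directions separately. The ``only if'' direction should be essentially immediate: if $(Q_1,\dots,Q_n)$ and $(F_1,\dots,F_n)$ are almost compatible, then for each $\epsilon$ we get $X_\epsilon$ with distributions $F_{i,\epsilon}\ll F_i$ and $D_{\mathrm{KL}}(F_{i,\epsilon}\|F_i)<\epsilon$. By Theorem~\ref{th:newstuff} (applied to $X_\epsilon$, whose law under $Q_i$ is $F_{i,\epsilon}$, taking $Q=\frac1n\sum Q_i$), together with Lemma~\ref{lem}, we obtain $(F_{1,\epsilon},\dots,F_{n,\epsilon})\lh (Q_1,\dots,Q_n)$, i.e. $(\frac{\d F_{i,\epsilon}}{\d F_\epsilon})_i|_{F_\epsilon}\lcx (\frac{\d Q_i}{\d Q})_i|_Q$ for $F_\epsilon=\frac1n\sum F_{i,\epsilon}$. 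Letting $\epsilon\to 0$, the KL bound forces $F_{i,\epsilon}\to F_i$ in total variation (Pinsker), and one checks that the left-hand side of the convex order converges weakly (with uniform integrability coming from the KL control of the densities) to $(\frac{\d F_i}{\d F})_i|_F$; since $\lcx$ is preserved under such limits against the fixed right-hand side, $(F_1,\dots,F_n)\lh(Q_1,\dots,Q_n)$ follows. I expect this half to be routine modulo the uniform-integrability bookkeeping.

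The substance is the ``if'' direction: assuming $Q_1,\dots,Q_n$ atomless and $(F_1,\dots,F_n)\lh(Q_1,\dots,Q_n)$, construct, for each $\epsilon$, a random variable $X_\epsilon$ on $(\Omega,\mathcal A)$ whose law under $Q_i$ is close to $F_i$ in KL. Write $F=\frac1n\sum F_i$, $Q=\frac1n\sum Q_i$, $\mathbf Z=(\frac{\d F_i}{\d F})_i$, $\mathbf Y=(\frac{\d Q_i}{\d Q})_i$, so that by Lemma~\ref{lem:lem33} we have $\mathbf Z|_F\lcx \mathbf Y|_Q$. The strategy is: (1) \emph{discretize} the target. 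Replace $\mathbf Z|_F$ by a finitely-supported approximation $\widehat{\mathbf Z}$ that is still dominated in convex order by $\mathbf Y|_Q$ and whose induced distributions $\widehat F_i$ are within $\epsilon$ of $F_i$ in KL --- here one partitions the range of $\mathbf Z$ into finitely many convex cells and replaces $\mathbf Z$ on each cell by the $F$-conditional barycenter, which keeps the convex order by the tower property (Jensen) and, for a fine enough partition, keeps the laws close; some care is needed to ensure absolute continuity $\widehat F_i\ll F_i$ and finiteness of KL, which is why we approximate the density $\mathbf Z$ rather than pushing it around freely. (2) \emph{realize the discrete target on $(\Omega,\mathcal A,Q)$}. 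By Lemma~\ref{lem:lem2} there is an abstract coupling with $\E[\mathbf Y'|\widehat{\mathbf Z}']=\widehat{\mathbf Z}'$ and the right marginals; we must transport this onto the given space. Define $X_\epsilon$ by: partition $\Omega$ according to the value of $\mathbf Y=(\frac{\d Q_i}{\d Q})$; within each such piece, use atomlessness of $Q$ (which follows from atomlessness of the $Q_i$, since $Q=\frac1n\sum Q_i$) to further split it measurably so that the conditional $Q$-mass allocated to each atom $z$ of $\widehat{\mathbf Z}$ matches the backward kernel of the Strassen coupling. Then $X_\epsilon$ takes the value $z$ on the corresponding set, and by Theorem~\ref{th:newstuff}(iii) the law of $X_\epsilon$ under $Q_i$ is exactly $\widehat F_i$ --- because the martingale identity $\E^Q[\mathbf Y|X_\epsilon]=\widehat{\mathbf Z}(X_\epsilon)$ holds by construction.

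The main obstacle is step (2): arranging the splitting inside $(\Omega,\mathcal A,Q)$ so that the conditional law of $\mathbf Y$ given $X_\epsilon=z$ has the prescribed barycenter $\widehat{\mathbf Z}$-value for each atom $z$ simultaneously for all $i$. Atomlessness gives us enough ``room'' to carve out sets of any prescribed $Q$-measure inside each level set of $\mathbf Y$, but one must verify that the required conditional distributions of $\mathbf Y$ are feasible --- i.e. that the Strassen backward kernel, which lives on an abstract space, can be realized as a splitting of the actual $Q$-distribution of $\mathbf Y$. This is exactly where $\mathbf Z|_F\lcx\mathbf Y|_Q$ is used: it guarantees a coupling whose $\mathbf Y$-marginal \emph{is} the $Q$-law of $\mathbf Y$, so the kernel is compatible with the true distribution, and the atomless splitting lemma (a Lyapunov/Sierpi\'nski-type argument on $(\Omega,\mathcal A)$) does the rest. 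A secondary technical point is controlling the KL divergences: since KL is not continuous in general, we must discretize at the level of the Radon--Nikodym densities and invoke that for a fine partition $D_{\mathrm{KL}}(\widehat F_i\|F_i)\to 0$ (this is where boundedness/truncation of $\mathbf Z$ before partitioning is convenient, and why we settle for \emph{almost} compatibility rather than exact compatibility --- the discretization error cannot in general be removed, as Example~\ref{ex:ct1} shows).
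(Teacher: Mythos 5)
Your necessity argument follows essentially the paper's route (Lemma \ref{lem} plus a limiting argument using Pinsker and boundedness of the densities by $n$), and is fine as a sketch. The genuine gap is in step (2) of your sufficiency argument. You discretize only $\mathbf Z=(\frac{\d F_1}{\d F},\dots,\frac{\d F_n}{\d F})$ and keep $\mathbf Y=(\frac{\d Q_1}{\d Q},\dots,\frac{\d Q_n}{\d Q})$ as it is, and then propose to ``partition $\Omega$ according to the value of $\mathbf Y$'' and, \emph{within each level set of $\mathbf Y$}, use atomlessness to allocate prescribed conditional masses to the atoms of $\widehat{\mathbf Z}$ so as to replicate the backward kernel of the Strassen coupling. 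This step fails: the level sets of $\mathbf Y$ are typically $Q$-null (there is a continuum of them), so no Lyapunov/Sierpi\'nski splitting can carve prescribed positive masses inside them; what you actually need is a random variable whose conditional law given $\mathbf Y$ equals a prescribed nondegenerate kernel, and that is available exactly under the \emph{conditionally atomless} hypothesis of Theorem \ref{th:meas.variable}, which this theorem deliberately avoids. Example \ref{ex:ct1} is a concrete obstruction: there $\mathbf Y$ is injective, so $\sigma(\mathbf Y)$ is all of $\mathcal B([0,1])$, every random variable is a function of $\mathbf Y$, and only backward-deterministic couplings can be realized on $(\Omega,\mathcal A,Q)$, whereas the Strassen coupling you invoke generally has a genuinely random backward kernel. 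Atomlessness of each $Q_i$ alone does not give the ``room'' you claim, and the exact identity $\E^Q[\mathbf Y\,|\,X_\epsilon]=\widehat{\mathbf Z}(X_\epsilon)$ cannot in general be enforced.

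The paper circumvents this by discretizing \emph{both} $\mathbf Y$ and $\mathbf Z$ on a multiplicative grid ($y\mapsto\exp(2^{-m}\lfloor 2^m\log y\rfloor)$, with $0\mapsto 0$), so that the carving by atomlessness takes place inside the positive-measure level sets of the discretized $\mathbf Y_m$; the price is that the martingale identity only holds up to multiplicative factors $e^{\pm 2^{-m}}$, which is precisely why one obtains only \emph{almost} compatibility, with $\d F_{i,\epsilon}/\d F_i$ sandwiched between $e^{-2^{-m+1}}$ and $e^{2^{-m+1}}$. Two further repairs your sketch would need even after that: your $X_\epsilon$ takes values in the finite set of atoms of $\widehat{\mathbf Z}\subset\R^n$, so its law under $Q_i$ is discrete and cannot be absolutely continuous with small Kullback--Leibler divergence with respect to a general $F_i$; one must spread each cell back over $\R$ according to the $F$-conditional law of the identity given the cell, keeping this spreading conditionally independent of the discretized $\mathbf Y_m$ (the paper's second layer of carving via the kernels $\varphi^m_{l,h}$). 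Also, with finitely many barycenter cells the KL control can fail (indeed be infinite) near $\{\d F_i/\d F=0\}$; the countable multiplicative grid that fixes the zero set yields the uniform ratio bound $e^{\pm\delta}$ that makes the KL estimate automatic.
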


The proof of Theorem \ref{th:almost:compatible} is a bit lengthy, and is postponed to  Appendix \ref{app:a2} of the paper.

\begin{rem}\label{rem:32}
The Kullback-Leibler divergence in Definition \ref{def:almost} is not the only possible choice to provide an equivalent condition in Theorem \ref{th:almost:compatible}. Indeed, the condition for necessity can be weakened to the convergence in probability of $\d F_{i,\epsilon}/\d F_i$ to 1 as $\epsilon\to 0$, by using Fatou's lemma and the fact that a sequence converging in probability has a subsequence converging almost surely; the proof for sufficiency implies results as strong as the uniform convergence of $\d F_{i,\epsilon}/\d F_i$ to 1. Consequently, the Kullback-Leibler divergence used in the definition of the almost compatibility can be replaced by a series of other conditions, including:
\begin{enumerate}[(i)]
\item $\d F_{i,\epsilon}/\d F_i\stackrel{\mathrm p}{\to} 1$;
\item $\d F_{i,\epsilon}/\d F_i\stackrel{\mathrm {a.s.}}{\to} 1$;
\item $F_{i,\epsilon}$ converges to $F_i$ in total variation, and $F_{i,\epsilon}\ll F_i$;
\item The R\'{e}nyi divergence of order $\infty$ between $F_{i,\epsilon}$ and $F_i$ converges to $0$ as $\epsilon\to 0$,
\end{enumerate}
among others, without altering the result of Theorem \ref{th:almost:compatible}.
\end{rem}

Almost compatibility has a practical implication for optimization problems.
Suppose that $Q_1,\dots,Q_n$ are atomless. For optimization problems of the form
\begin{equation*} 
  \sup\{\phi (P\circ Y^{-1}):Y\in \mathcal L^0(\Omega;\mathbb R) \mbox{~has distribution $F_i$ under $Q_i$,~$i=1,\dots,n$}\},
  \end{equation*}
  where $\phi : \mathcal F\to [-\infty,\infty]$ is a functional,
  it suffices to consider
$$
  \sup\left\{\phi(F): F\in \mathcal F,~(F_1,\dots,F_n,F)\lv (Q_1,\dots,Q_n,P)\right\},
$$
  as long as $\phi$ is continuous with respect to any of the convergence types listed in Remark \ref{rem:32}.

 \subsection{Equivalence of heterogeneous order and compatibility}\label{sec:compatibility}

 In view of the discussions in Section \ref{sec:sec33}, $(F_1,\dots,F_n)\lh (Q_1,\dots,Q_n)$ is not sufficient for compatibility of $(Q_1,\dots,Q_n)$ and $(F_1,\dots,F_n)$, but sufficient for almost compatibility if   each of $Q_1,\dots,Q_n$  is atomless.
 In this section, we seek for a
  slightly stronger condition on the $n$-tuple $(Q_1,\dots,Q_n)$, under which compatibility and almost compatibility coincide.

 \begin{defn}\label{def:condatomless}
$(Q_1,\dots,Q_n)\in \mathcal M_1^n$ is \emph{conditionally atomless} if there exist  $Q\in\mathcal M_1$ dominating $(Q_1, \dots, Q_n)$ and  $X\in \mathcal L^0(\Omega;\mathbb R)$ such that under $Q$, $X$ is continuously distributed and independent of $(\frac{\d Q_1}{\d Q},\dots,\frac{\d Q_n}{\d Q})$.
 \end{defn}

Clearly, if $(Q_1,\dots,Q_n) $ is {conditionally atomless}, then each of $Q_1,\dots,Q_n$ is atomless, since a continuous random variable under $Q$ is also continuous under each $Q_1,\dots,Q_n$.

 \begin{rem}\label{rem:new33}
If $Q_1,\dots,Q_n$ are mutually singular and each of them is atomless, then $(Q_1,\dots,Q_n)$ is conditionally atomless. This can be seen directly by constructing a uniform random variable $U_i$ on $[0,1]$ under $Q_i$ for $i=1,\dots,n$, and writing $Q=\frac 1n\sum_{i=1}^n Q_i$.
As $Q_1,\dots,Q_n$ are mutually singular,  there exists a partition $\{\Omega_1,\dots,\Omega_n\}\subset \mathcal A$ of $\Omega$ such that $Q_i(\Omega_i)=1$, $i=1,\dots,n$.
Then the random variable $U=\sum_{i=1}^n U_i \id_{\Omega_i}$ is uniformly distributed and independent of $(\frac{\d Q_1}{\d Q},\dots,\frac{\d Q_n}{\d Q})$ under $Q$.
\end{rem}

Before  approaching the main results of this section, we  recall some basic facts about conditional distributions.
For random vectors $\mathbf T$ and $\mathbf S$ defined on a probability space $(\Omega, \mathcal A, P)$ and taking values in $\mathbb R^m$ and $\mathbb R^n$, respectively, the conditional distribution of $\mathbf T$ given $\mathbf S$ (under $P$), denoted by $\mathbf T|\mathbf S$,   is a mapping from $\mathcal B(\mathbb R^m)\times \Omega$ to $\mathbb R$, such that for each $\omega\in\Omega$, $\mathbf T|\mathbf S(\cdot,\omega)$ is a probability measure on $(\mathbb R^m, \mathcal B(\mathbb R^m))$, and for each $A\in\mathcal B(\mathbb R^m)$, $\mathbf T|\mathbf S(A,\cdot)=P(\mathbf T\in A|\sigma(\mathbf S))$ $P$-almost surely. We write $\mathbf T|\mathbf S(\omega)$ for the probability measure $\mathbf T|\mathbf S(\cdot, \omega)$, and $\mathbf T|\mathbf S(\omega)_P$ when it is necessary to specify the probability measure $P$. Moreover, there exists a version of $\mathbf T|\mathbf S$ for which the conditional distribution only depends on the value of $\mathbf S$, i.e., $\mathbf T|\mathbf S(\omega_1)=\mathbf T|\mathbf S(\omega_2)$ whenever $\mathbf S(\omega_1)=\mathbf S(\omega_2)$. We will always use this version. For an event $E\in\mathcal A$, the conditional probability of $E$ given $\mathbf S=s$, denoted by $P(E|\mathbf S=s)$, should be understood as $P[E|\sigma(\mathbf S)](\omega)$ for $\omega$ satisfying $\mathbf S(\omega)=s$.

With the help of conditional distributions, we first note that the independence in Definition \ref{def:condatomless} is not essential and can be replaced by continuity of the conditional distribution. Moreover, similarly to  heterogeneity order, the reference probability measure $Q$ can always be taken as $Q=\frac{1}{n}\sum_{i=1}^nQ_i$.

\begin{prop}\label{prop:conatomless}
For $(Q_1,\dots,Q_n)\in \mathcal M_1^n$, the following are equivalent:
\begin{enumerate}[(i)]
\item $(Q_1,\dots, Q_n)$ is conditionally atomless.\label{con.atomless}
\item For $Q=\frac{1}{n}\sum_{i=1}^nQ_i$, there exists a continuous random variable in $(\Omega,\mathcal A)$  independent of $(\frac{\d Q_1}{\d Q},\dots,\frac{\d Q_n}{\d Q})$ under $Q$.\label{con.averageQ}
\item There exists   $X\in \mathcal L^0(\Omega;\mathbb R)$ such that for some $Q\in\mathcal M_1$ which dominates $(Q_1,\dots,Q_n)$ (equivalently, for $Q=\frac{1}{n}\sum_{i=1}^nQ_i$), a version of the conditional distribution $ X|\mathbf Y$ is everywhere continuous under $Q$ where $\mathbf Y=(\frac{\d Q_1}{\d Q},\dots,\frac{\d Q_n}{\d Q})$.\label{con.cont}
\end{enumerate}
\end{prop}

\begin{proof}
Note that (\ref{con.cont}) has two versions: one states the existence of $Q$ and the other specifies $Q$.
It is trivial to see that (\ref{con.averageQ}) implies (\ref{con.atomless}) and both versions of (\ref{con.cont}).  It remains to show (\ref{con.cont})$\Rightarrow$(\ref{con.atomless})$\Rightarrow$(\ref{con.averageQ}).

We first show (\ref{con.atomless})$\Rightarrow$(\ref{con.averageQ}). Assume $(Q_1,\dots, Q_n)$ is conditionally atomless. As a result, there exist $Q'\in\mathcal M_1$ and a random variable $X$, such that $X$ and $\mathbf Y:=(\frac{\d Q_1}{\d Q'},\dots,\frac{\d Q_n}{\d Q'})$ are independent under $Q'$.
For $i=1,\dots,n$, $A\in\mathcal B(\mathbb R)$ and $B\in\mathcal B(\mathbb R^n)$,
\begin{align*}
Q_i(X\in A, \mathbf Y\in B)&=\E^{Q'}\left[\frac{\d Q_i}{\d Q'}\id_{\{X\in A\}}\id_{\{\mathbf Y\in B\}}\right]\\
&=\E^{Q'}[\id_{\{X\in A\}}]\E^{Q'}\left[\frac{\d Q_i}{\d Q'}\id_{\{\mathbf Y\in B\}}\right]=Q'(X\in A)Q_i(\mathbf Y\in B).
\end{align*}

The independence between $X$ and $\mathbf Y$ also implies that
$$
Q_i(X\in A)=\E^{Q'}\left[\frac{\d Q_i}{\d Q'}\id_{\{X\in A\}}\right]=\E^{Q'}\left[\frac{\d Q_i}{\d Q'}\right]\E^{Q'}[\id_{\{X\in A\}}]=Q'(X\in A).
$$

Thus, $X$ has the same distribution under $Q_i$, $i=1,\dots, n$. Let $Q=\frac{1}{n}\sum_{i=1}^nQ_i$, and note that $X$ also has the same distribution under $Q$. Moreover,
$$
Q_i(X\in A, \mathbf Y\in B)=Q'(X\in A)Q_i(\mathbf Y\in B)=Q_i(X\in A)Q_i(\mathbf Y\in B),
$$
which means that $X$ and $\mathbf Y$ are independent under $Q_i$ for $i=1,\dots, n$. For any $A\in\mathcal B(\mathbb R)$ and $B\in\mathcal B(\mathbb R^n)$,
\begin{align*}
Q(X\in A, \mathbf Y\in B)&=\frac{1}{n}\sum_{i=1}^n Q_i(X\in A, \mathbf Y\in B)\\
&=\frac{1}{n}\sum_{i=1}^n Q_i(X\in A)Q_i(\mathbf Y\in B)\\
&=Q(X\in A)\frac{1}{n}\sum_{i=1}^n Q_i(\mathbf Y\in B)=Q(X\in A)Q(\mathbf Y\in B),
\end{align*}
and hence $X$ and $\mathbf Y$ are independent under $Q$. As a result, $X$ is also independent of
$$
\frac{\mathbf Y}{\Vert\mathbf Y\Vert_1}=\frac{1}{n}\left(\frac{\d Q_1}{\d Q},\cdots, \frac{\d Q_n}{\d Q}\right)
$$
under $Q$, where $\Vert\cdot\Vert_1$ is the Manhattan norm on $\R^n$. Therefore, we conclude that $X$ and $(\frac{\d Q_1}{\d Q},\cdots, \frac{\d Q_n}{\d Q})$ are independent under $Q$.

Next we prove  (\ref{con.cont})$\Rightarrow$(\ref{con.atomless}). Take $X$ as in (\ref{con.cont}) and let $F_\omega$ be the distribution function of $ X|\mathbf Y(\omega)$, and define $X': \Omega\to\mathbb R$ by $X'(\omega)=F_\omega(X(\omega))$. It is fundamental, though a bit lengthy, to check that $X'$ is a random variable; moreover, $X'|\mathbf Y$ almost surely follows a uniform distribution on $[0,1]$. As a result, $X'$ is a continuous random variable independent of $(\frac{\d Q_1}{\d Q},\dots,\frac{\d Q_n}{\d Q})$ under $Q$. Consequently, both versions of (\ref{con.cont}) imply (\ref{con.atomless}).
\end{proof}

\begin{rem}\label{re:samedist}
As a byproduct of the above proof, we note that if a random variable $X$ is independent of $(\frac{\d Q_1}{\d Q},\dots,\frac{\d Q_n}{\d Q})$ under a probability measure $Q$, then $X$ is also independent of $(\frac{\d Q_1}{\d Q},\dots,\frac{\d Q_n}{\d Q})$ under each of $Q_1,\dots, Q_n$. Moreover, $X$ has the same distribution under $Q_1,\dots,Q_n$ and $Q$.
\end{rem}

\begin{rem}
Right before the publication of this paper,  a new preprint \cite{D19} introduces the concept of a conditionally atomless $\sigma$-field 
 which turns out to be closely related to our notion of  conditionally atomless   measures in Definition \ref{def:condatomless}. For the connection and the differences between the two formulations, see the discussions in \cite{D19}\footnote{We thank Freddy Delbaen for pointing out the preprint and for  very useful discussions.}.
\end{rem}

Now we   turn back to our main target, compatibility of $(F_1,\dots,F_n)$ and $(Q_1,\dots,Q_n)$.
As discussed in Section \ref{sec:sec33}, to show compatibility one  needs to construct a random variable $X$ in $(\Omega,\mathcal A)$ such that
 $$
 \left(\frac{\d F_1}{\d F},\dots, \frac{\d F_n}{\d F} \right)(X) =   \E^Q\left[\left.\left(\frac{\d Q_1}{\d {Q}},\dots,\frac{\d Q_n}{\d {Q}}\right)\right|X\right].
 $$
 It turns out that the assumption that $(Q_1,\dots,Q_n)$ is conditionally atomless allows for such a construction.

\begin{theorem}\label{th:meas.variable}
Suppose that $(Q_1,\dots,Q_n)\in \mathcal M_1^n$ is conditionally atomless and $(F_1,\dots,F_n)\in\mathcal F^n$.
$(Q_1,\dots,Q_n)$ and $(F_1,\dots,F_n)$ are compatible if and only if  $(F_1,\dots,F_n)\lv (Q_1,\dots,Q_n)$.
\end{theorem}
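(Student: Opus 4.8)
The plan is to prove the two directions separately. Necessity, namely that compatibility implies $(F_1,\dots,F_n)\lh(Q_1,\dots,Q_n)$, is already contained in Lemma \ref{lem}, so the entire burden is the sufficiency direction. For this I would work with the canonical reference measures $Q=\frac1n\sum_{i=1}^nQ_i$ and $F=\frac1n\sum_{i=1}^nF_i$, which are legitimate by Lemma \ref{lem:lem33}. Set $\mathbf Z=(\frac{\d F_1}{\d F},\dots,\frac{\d F_n}{\d F})$ on $(\R,\mathcal B(\R),F)$ and $\mathbf Y=(\frac{\d Q_1}{\d Q},\dots,\frac{\d Q_n}{\d Q})$ on $(\Omega,\mathcal A,Q)$. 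By hypothesis $\mathbf Z|_F\lcx\mathbf Y|_Q$, so Lemma \ref{lem:lem2} (Strassen) gives an auxiliary probability space carrying $\mathbf Z',\mathbf Y'$ with $\mathbf Z'\laweq\mathbf Z$, $\mathbf Y'\laweq\mathbf Y$ and $\E[\mathbf Y'\mid\mathbf Z']=\mathbf Z'$; equivalently there is a (regular) conditional distribution $K(\mathbf z,\cdot)$ of $\mathbf Y$ given $\mathbf Z=\mathbf z$ such that $\int \mathbf y\,K(\mathbf z,\d\mathbf y)=\mathbf z$ for $F$-a.e.\ $\mathbf z$. By Theorem \ref{th:newstuff}, it suffices to build $X\in L(\Omega,\mathcal A)$ with $Q\circ X^{-1}=F$ and $\mathbf Z(X)=\E^Q[\mathbf Y\mid X]$.

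The key idea for the construction is to use conditional atomlessness to manufacture, inside $(\Omega,\mathcal A,Q)$, a random variable that plays the role of $X$ by ``inverting'' the kernel $K$. By Proposition \ref{prop:conatomless}\,(\ref{con.cont}), there is a random variable $U$ on $(\Omega,\mathcal A)$ whose conditional distribution given $\mathbf Y$ is, under $Q$, $\mathrm{Unif}[0,1]$ everywhere (in particular $U$ is independent of $\mathbf Y$ under $Q$, and by Remark \ref{re:samedist} also under each $Q_i$). The plan is: first disintegrate the target joint law. On $\R$ under $F$, the pair $(X_{\mathrm{target}},\mathbf Z(X_{\mathrm{target}}))$ must have $X_{\mathrm{target}}\laweq F$ and $\mathbf Z(X_{\mathrm{target}})=\mathbf Z$ deterministically; so what we really need is a random element $X$ of $\R$, measurable w.r.t.\ $(\mathbf Y,U)$, such that the $Q$-law of $(X,\mathbf Y)$ equals the law of $(W,\mathbf Y^{\sharp})$ where $W\laweq F$ and, writing $\mathbf z=\mathbf Z(W)$, the conditional law of $\mathbf Y^\sharp$ given $W$ is $K(\mathbf z,\cdot)$. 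Concretely: using $U$ and a measurable selection / quantile-transform argument, define $X=\psi(\mathbf Y,U)$ so that, conditionally on $\mathbf Y=\mathbf y$, the variable $X$ has the distribution on $\R$ obtained from the ``reverse kernel'' of $K$ evaluated at $\mathbf y$ — i.e.\ the conditional law of $W$ given $\mathbf Y^\sharp=\mathbf y$ in the Strassen coupling, which exists because the marginal of $\mathbf Y^\sharp$ is exactly $\mathbf Y|_Q$. The conditional continuity of $U$ given $\mathbf Y$ is what guarantees this conditional law can be realized measurably via a quantile transform for $Q$-a.e.\ $\mathbf y$, even when that conditional law is non-atomic. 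Then one checks: (a) $Q\circ X^{-1}=F$, by integrating the conditional laws against the $Q$-law of $\mathbf Y$; and (b) $\E^Q[\mathbf Y\mid X]=\mathbf Z(X)$, which follows because by construction the $Q$-joint law of $(X,\mathbf Y)$ coincides with the Strassen joint law of $(W,\mathbf Y^\sharp)$, and there $\E[\mathbf Y^\sharp\mid W]=\E[\mathbf Y^\sharp\mid\mathbf Z(W)]=\mathbf Z(W)$ since $\mathbf Z(W)$ is a function of $W$ and the martingale property holds at the level of $\mathbf Z$. An equivalent and perhaps cleaner packaging: first build $X$ as a function of $(\mathbf Y,U)$ so that $(\mathbf Z(X),\mathbf Y)|_Q$ has the Strassen law of $(\mathbf Z',\mathbf Y')$ — feasible because that law has correct $\mathbf Y$-marginal and we have spare randomness $U$ — and then separately use a second slice of the continuous randomness to spread $X$ within each level set $\{\mathbf Z(X)=\mathbf z\}$ so as to match $F$ restricted to that level set.

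I expect the main obstacle to be exactly the measurability and realizability of this inverse construction: one must (i) produce a regular version of the Strassen coupling that depends measurably on the data, (ii) extract from it a reverse conditional law $\mathbf y\mapsto(\text{law of }X\text{ given }\mathbf Y=\mathbf y)$, and (iii) realize that conditional law as a measurable function of $(\mathbf y,u)$ with $u$ uniform — and crucially argue that the conditional continuity/independence supplied by Proposition \ref{prop:conatomless} is sufficient spare randomness to realize the possibly atomless conditional laws, while simultaneously matching the $F$-marginal exactly (not just up to small error as in Theorem \ref{th:almost:compatible}). Passing from ``almost compatible'' to ``compatible'' is precisely where conditional atomlessness, rather than mere atomlessness of each $Q_i$, is essential, and the counterexample in Example \ref{ex:ct1} shows the hypothesis cannot be weakened; so the proof must genuinely exploit a continuous coordinate that is independent of $\mathbf Y$ under $Q$. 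A secondary technical point is handling the set $\{\mathbf Y=0\}$ (equivalently $\{\mathbf Z=0\}$) and null sets consistently, as in the proof of Lemma \ref{lem:lem33}; I would dispose of this by the same convention that $X$ is defined arbitrarily there, since it carries no mass for any $Q_i$ or $F_i$.
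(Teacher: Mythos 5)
Your proposal follows essentially the same route as the paper: necessity from Lemma \ref{lem}, and for sufficiency a Strassen coupling (Lemma \ref{lem:lem2}) combined with a construction of $X$ inside $(\Omega,\mathcal A,Q)$ as a function of $(\mathbf Y,U)$ realizing the reverse conditional law, where $U$ is the continuous variable independent of $\mathbf Y$ supplied by conditional atomlessness, concluding via Theorem \ref{th:newstuff}. The paper isolates exactly this transplant step as Lemma \ref{lem:transplant} and carries out the measurable realization you flag as the main obstacle by a dyadic discretization and refinement of the conditional laws, so your plan is correct and matches the paper's argument.
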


The key step to prove Theorem \ref{th:meas.variable} is the following lemma, which might be of independent interest.

\begin{lem}\label{lem:transplant}
Let $\mathbf X=(X_1,\dots,X_m)$ and $\mathbf Y=(Y_1,\dots,Y_n)$ be random vectors defined on probability spaces $(\Omega_1, \mathcal A_1, P_1)$ and $(\Omega_2, \mathcal A_2, P_2)$, respectively, and $f$ be a measurable function from $(\mathbb R^m, \mathcal B(\mathbb R^m))$ to $(\mathbb R^n, \mathcal B(\mathbb R^n))$. If the convex order relation $f(\mathbf X)|_{P_1}\lcx \mathbf Y|_{P_2}$ holds, and   there exists a continuous random variable $U$ defined on $(\Omega_2, \mathcal A_2, P_2)$ independent of $\mathbf Y$, then there exists a random vector $\mathbf W=(W_1,\dots, W_m)$ defined on $(\Omega_2, \mathcal A_2, P_2)$, such that $\mathbf W|_{P_2} \laweq \mathbf X|_{P_1}$, and
$$
f(\mathbf W)=\E^{P_2}[\mathbf Y|\mathbf W].
$$
\end{lem}

\begin{proof}
Since $f(\mathbf X)|_{P_1}\lcx \mathbf Y|_{P_2}$, by Lemma \ref{lem:lem2}, there exists a probability space $(\Omega', \mathcal A', P')$ and random vectors $\mathbf Z$, $\mathbf Y'$ defined on it and taking values in $\mathbb R^n$, such that $\mathbf Z|_{P'}\laweq f(\mathbf X)|_{P_1}$, $\mathbf Y'|_{P'}\laweq \mathbf Y|_{P_2}$, and $\mathbf Z=\E^{P'}[\mathbf Y'|\mathbf Z]$.

Construct random vectors $\mathbf X''=(X''_1,\dots,X''_m)$ and $\mathbf Y''=(Y''_1,\dots,Y''_n)$ on a (possibly different) probability space $(\Omega'', \mathcal A'', P'')$, such that $\mathbf X''|_{P''}\laweq \mathbf X|_{P_1}$ and the conditional distributions satisfy $\mathbf Y''|\mathbf X''(\omega'')_{P''}=\mathbf Y'|\mathbf Z(\omega')_{P'}$ for all $\omega',\omega''$ satisfying $\mathbf Z(\omega')=f(\mathbf X''(\omega''))$ . It is easy to see that $\mathbf Y''|_{P''}\laweq \mathbf Y|_{P_2}$, and
$$
\E^{P''}[\mathbf Y''|\mathbf X''](\omega'')=\E^{P'}[\mathbf Y'|\mathbf Z](\omega')=\mathbf Z(\omega')=f(\mathbf X''(\omega'')), \mbox{~~for $P''$-a.s.~} \omega'' \in \Omega''.
$$

What is left is therefore to construct a random vector $\mathbf W$ on $(\Omega_2,\mathcal A_2, P_2)$ such that $(\mathbf W,\mathbf Y)|_{P_2}\laweq (\mathbf X'', \mathbf Y'')|_{P''}$. The idea is similar to the proof of Theorem \ref{th:almost:compatible}. Indeed, for $\ell=0,1,\dots$ and $h=(h_1,\dots,h_m)\in {\mathbb Z}^{m}$, consider the distribution of $\mathbf Y''$ restricted on the event $\{X_i''\in [h_i2^{-\ell}, (h_i+1)2^{-\ell}), ~1\leq i\leq m\}$. {It has a density function, denoted by $\psi_{\ell,h}(\mathbf y)$, $\mathbf y\in \mathbb R^n$, with respect to the unconditional distribution of $\mathbf Y''$}. Without loss of generality, assume $U$ follows a uniform distribution on $[0,1]$. Then for each $\mathbf y$ and $\ell=0,1,\dots$, we divide $[0,1]$ into disjoint intervals $\{I_{\ell,h}(\mathbf y)\}_{h\in {\mathbb Z}^{m}}$, such that $|I_{\ell,h}(\mathbf y)|=\psi_{\ell,h}(\mathbf y)$. Moreover, we can make $\{I_{\ell',h}(\mathbf y)\}_{h\in {\mathbb Z}^{m}}$ a refinement of $\{I_{\ell,h}(\mathbf y)\}_{h\in {\mathbb Z}^{m}}$ for any $\ell'>\ell$. Then define random vector $\mathbf W_\ell=(W_{\ell,1},\dots,W_{\ell,m})$ by
$$
W_{\ell,i}=h_i2^{-\ell} \text{ for } U\in I_{\ell,h}(\mathbf Y), \quad i=1,\dots,m.
$$
Let $\mathbf W=\lim_{\ell\to\infty}\mathbf W_\ell$. The point-wise limit exists due to the completeness of ${\mathbb R}^{m}$.

For any given $\mathbf y$, any $\ell=0,1,\dots$ and $h\in\mathbb Z^m$,
\begin{align*}
& P_2(\mathbf W_i\in[h_i2^{-\ell}, (h_i+1)2^{-\ell}), ~1\leq i\leq m|\mathbf Y=\mathbf y)\\
&= P_2(\mathbf W_{\ell,i}=h_i2^{-\ell}, ~1\leq i\leq m|\mathbf Y=\mathbf y)\\
&= \psi_{\ell,h}(\mathbf y)= P''(\mathbf X''_i\in[h_i2^{-\ell}, (h_i+1)2^{-\ell}), ~1\leq i\leq m|\mathbf Y''=\mathbf y).
\end{align*}
Since $\{[h_i2^{-\ell}, (h_i+1)2^{-\ell})\}_{h\in \mathbb Z^m, \ell=0,1,\dots}$ forms a basis for $\mathcal B(\mathbb R^m)$, we have $\mathbf W|\mathbf Y(\omega)$ under $P_2$ equals $\mathbf X''|\mathbf Y''(\omega'')$ for any $\omega\in\Omega$ and $\omega''\in\Omega''$ satisfying $\mathbf Y(\omega)=\mathbf Y''(\omega'')$. Moreover, recall that $\mathbf Y|_{P_2}\laweq \mathbf Y''|_{P''}$. As a result, we conclude that $(\mathbf W, \mathbf Y)|_{P_2}\laweq (\mathbf X'', \mathbf Y'')_{P''}$.
\end{proof}

\begin{proof}[Proof of Theorem \ref{th:meas.variable}]

Necessity is guaranteed by Lemma \ref{lem}. We only show sufficiency.
 Suppose that $(F_1,\dots,F_n)\lv (Q_1,\dots,Q_n)$. We shall show that $(Q_1,\dots,Q_n)$ and $(F_1,\dots,F_n)$ are compatible.
By Lemma \ref{lem:lem33},
$$
\left.\left(\frac{\d F_1}{\d F},\dots, \frac{\d F_n}{\d F} \right)\right|_F \lcx \left. \left(\frac{\d Q_1}{\d Q},\dots,\frac{\d Q_n}{\d Q}\right)\right|_Q
$$
for $F=\frac{1}{n}\sum_{i=1}^n F_i$ and $Q=\frac{1}{n}\sum_{i=1}^n Q_i$. Since $(Q_1,\dots,Q_n)$ is conditionally atomless, $Q_1,\dots, Q_n$ are all atomless, so is $Q$. Hence there exists a random variable $X'$ defined on $(\Omega, \mathcal A)$, such that $F=Q\circ {X'}^{-1}$. As a result,
$$
  \left.\left(\frac{\d F_1}{\d F},\dots, \frac{\d F_n}{\d F} \right)(X')\right|_Q\laweq \left.\left(\frac{\d F_1}{\d F},\dots, \frac{\d F_n}{\d F} \right)\right|_F \lcx \left. \left(\frac{\d Q_1}{\d Q},\dots,\frac{\d Q_n}{\d Q}\right)\right|_Q.
$$

Applying Lemma \ref{lem:transplant} with $f(x)=(\frac{\d F_1}{\d F},\dots, \frac{\d F_n}{\d F})(x)$, there exists a random variable $X$ defined on $(\Omega, \mathcal A)$, such that
$$
\left(\frac{\d F_1}{\d F},\dots, \frac{\d F_n}{\d F} \right)(X)=\E^Q \left[\left. \left(\frac{\d Q_1}{\d Q},\dots,\frac{\d Q_n}{\d Q}\right)\right|X\right],
$$
which, by Theorem \ref{th:newstuff}, implies compatibility.
\end{proof}

\begin{rem}
As shown in Theorem \ref{th:meas.variable}, compatibility is closely related to heterogeneity order $\lh$, and hence it defines a partial order. The direction of the order comes from the fact that a measurable mapping needs not to be a bijection. As multiple points are mapped to a same image, the ``heterogeneity'' between measures decreases. However, if we require the mapping to be a bijection, then compatibility  becomes an equivalence relation. Indeed, in this case Theorem \ref{th:meas.variable} would be applicable to both directions, which means that (\ref{eq:question}) holds for both directions, with $P=\frac{1}{n}\sum_{i=1}^n P_i$ and $Q=\frac{1}{n}\sum_{i=1}^n Q_i$. As a result, we must have
$$
\left.\left(\frac{\d P_1}{\d P},\dots, \frac{\d P_n}{\d P} \right)\right|_P \laweq \left. \left(\frac{\d Q_1}{\d Q},\dots,\frac{\d Q_n}{\d Q}\right)\right|_Q.
$$
Moreover, the proof of Theorem \ref{th:meas.variable} actually shows that, assuming both tuples of measures are conditionally atomless, the above condition is not only necessary but also sufficient to guarantee the existence of a bijection linking $(P_1,\dots, P_n)$ to $(Q_1,\dots, Q_n)$.
\end{rem}

\begin{rem}\label{cor:37}
As a simple consequence of Theorem \ref{th:meas.variable}, in the case where $n=2$ and $Q_1\ll Q_2$, if $(Q_1,Q_2)$ and $(F_1,F_2)$ are compatible, then $F_1\ll F_2$ and
\begin{equation*}
\left. \frac{\d F_1}{\d F_2} \right|_{F_2} \lcx \left.  \frac{\d Q_1}{\d Q_2}\right|_{Q_2}.
\end{equation*}
The converse is also true if, in addition, $(Q_1,Q_2)$ is conditionally atomless. Therefore, the heterogeneity order condition becomes one-dimensional, and is easy to check.   Chapter 3 of \cite{SS07} contains several classic methods to check $X|_P\lcx Y|_Q$ for arbitrary random variables $X$ and $Y$ and probability measures $P$ and $Q$.
\end{rem}


%
%
%
%
%

  Below we discuss  a few special cases of  compatible $(Q_1,\dots,Q_n)\in\mathcal M_1^n$ and $(F_1,\dots,F_n)\in\mathcal F^n$   based on the heterogeneity order condition, in particular in the context of Proposition \ref{prop:properties} and Theorem \ref{th:meas.variable}. We shall see how our main results are consistent with natural intuitions. 

 1. Assume that $Q_1,\dots,Q_n$ are identical. The natural intuition  is that the respective distributions $F_1,\dots,F_n$ of a random variable under  $Q_1,\dots,Q_n$ have to be identical as well. Indeed, by Lemma \ref{lem}, compatibility implies $(F_1,\dots,F_n)\lh (Q_1,\dots,Q_n)$. By Proposition \ref{prop:properties} (ii), $ F_1,\dots,F_n$ are identical.

2. Assume that $Q_1,\dots,Q_n$ are mutually singular, and each of them is atomless. The natural intuition here is that the respective distributions $F_1,\dots,F_n$ of any random variable under   $Q_1,\dots,Q_n$ are arbitrary.
Proposition \ref{prop:properties} (iv) suggests that  $(F_1,\dots,F_n)\lh (Q_1,\dots,Q_n)$ holds for any $(F_1,\dots,F_n)\in \mathcal F^n$.
Moreover, $(Q_1,\dots,Q_n)$ is conditionally atomless, as seen in Remark \ref{rem:new33}.
Therefore, by Theorem \ref{th:meas.variable}, a mutually singular tuple of atomless probability measures on $ (\Omega,\mathcal A)$ is compatible with an arbitrary tuple of distributions on $\R$.

3. Assume that $F_1,\dots,F_n$ are mutually singular.
 The natural intuition here is that the probability measures $Q_1,\dots,Q_n$ have to be also mutually singular to allow for compatibility.
  Similarly to the previous case, this is justified by Theorem \ref{th:meas.variable} and Proposition \ref{prop:properties} (v).

4. Assume that $F_1,\dots,F_n$ are identical, and $(Q_1,\dots,Q_n)$ is conditionally atomless.
Proposition \ref{prop:properties} (i) gives $ (F_1,\dots,F_n)\lh (Q_1,\dots,Q_n)$.
It follows from Theorem \ref{th:meas.variable} that $(Q_1,\dots,Q_n)$ and $(F_1,\dots,F_n)$ are compatible.
We conclude that, as long as $(Q_1,\dots,Q_n)$ is conditionally atomless, for any distribution $F\in\mathcal F$, there exists a random variable $X$ which has distribution $F$ under each of $Q_i$, $i=1,\dots,n$. Indeed, as $(Q_1,\dots,Q_n)$ is conditionally atomless, there exists $Q$ dominating $(Q_1,\dots,Q_n)$ and an $F$-distributed random variable $X$ under $Q$ independent of $(\frac{\d Q_1}{\d Q},\dots,\frac{\d Q_n}{\d Q})$. Remark \ref{re:samedist} then implies that $X$ also has distribution $F$ under each $Q_1,\dots,Q_n$.

5. Assume that $Q_1,\dots,Q_n$ are equivalent. Intuitively, the respective distributions $F_1,\dots,F_n$ of any random variable under  $Q_1,\dots,Q_n$ have to be equivalent.
This fact is implied by Proposition \ref{prop:properties} (iii).

\begin{rem}\label{re:link}
A notion similar to  heterogeneity order is
useful
 in comparison of statistical experiments, an area of study originated by Blackwell (\cite{B51, B53}); the interested reader is referred to \cite{L96} and \cite{T91} for summaries.
\end{rem}

\section{Distributional compatibility for stochastic processes}\label{sec:processes}

\subsection{General results}

In this section we extend our results to stochastic processes with sample paths which are continuous from right with left limits (c\`{a}dl\`{a}g).
For a (finite or infinite) closed interval $I\subseteq \R$, let $D(I)$ be the Skorokhod space on $I$, {i.e.}, the space of all c\`{a}dl\`{a}g functions defined on $I$. Let $\mathcal D_I$ be the Borel $\sigma$-field of the Skorokhod topology $J_1$. Denote by $\mathcal G_I=\mathcal M_1(D(I))$ the set of probability measures on $(D(I), \mathcal D_I)$. Our first step is to generalize the definition of compatibility to this setting, which follows in a natural way.

\begin{defn}
For a closed interval $I\subseteq \R$, we say $(Q_i)_{i\in\mathcal J}\subseteq \mathcal M_1$ and $(G_i)_{i\in\mathcal J}\subseteq \mathcal G_I$ are \emph{compatible} if there exists a c\`{a}dl\`{a}g stochastic process defined on $(\Omega,\mathcal A)$, denoted by $X=\{X(t)\}_{t\in I}$, such that
for each $i\in \mathcal J$, the distribution of $X$ under $Q_i$ is $ G_i$.
\end{defn}

The following is a parallel result to Theorem \ref{th:newstuff}, which shares the same proof.
\begin{prop}\label{prop:prop1}
Let $I\subseteq \R$ be a closed interval, $(Q_i)_{i\in\mathcal J}\subseteq \mathcal M_1$ and $(G_i)_{i\in\mathcal J}\subseteq \mathcal G_I$.
 A stochastic process $X$ has distribution $G_i$ under $Q_i$ for $i\in\mathcal J$ if and only if for all $Q\in \mathcal M_1$ dominating $(Q_i)_{i\in\mathcal J}$,  $G=Q \circ X^{-1}$ dominates $(G_i)_{i\in\mathcal J}$, and for all $i\in\mathcal J$,
$$ \frac{\d G_i}{\d G}(X) =   \E^Q\left[\left.\frac{\d Q_i}{\d {Q}}\right|\sigma(X)\right].$$
\end{prop}

Then we have, parallel to Theorem \ref{th:meas.variable}:

\begin{theorem}\label{th:stopro}
  Suppose that $(Q_1,\dots,Q_n)\in \mathcal M_1^n$ is conditionally atomless,  $I\subseteq \R$ is a closed interval, and $(G_1,\dots,G_n)\in\mathcal G_I^n$.
$(Q_1,\dots,Q_n)$ and $(G_1,\dots,G_n)$ are compatible if and only if  $(G_1,\dots,G_n)\lv (Q_1,\dots,Q_n)$.
\end{theorem}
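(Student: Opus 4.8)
The plan is to mirror the proof of Theorem~\ref{th:meas.variable}, since essentially none of that argument used the structure of $(\R,\mathcal B(\R))$. Necessity is immediate: Lemma~\ref{lem} (and the remark that it generalizes verbatim to stochastic processes, with $(G_1,\dots,G_n)\in\mathcal G_I^n$ in place of $(F_1,\dots,F_n)\in\mathcal F^n$, via Proposition~\ref{prop:prop1}) shows that if $(Q_1,\dots,Q_n)$ and $(G_1,\dots,G_n)$ are compatible then $(G_1,\dots,G_n)\lv(Q_1,\dots,Q_n)$. So the work is entirely in sufficiency.

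For sufficiency, assume $(G_1,\dots,G_n)\lv(Q_1,\dots,Q_n)$. By Lemma~\ref{lem:lem33} we may take $G=\frac1n\sum_{i=1}^n G_i$ and $Q=\frac1n\sum_{i=1}^n Q_i$, so that
$$
\left.\left(\frac{\d G_1}{\d G},\dots,\frac{\d G_n}{\d G}\right)\right|_G \lcx \left.\left(\frac{\d Q_1}{\d Q},\dots,\frac{\d Q_n}{\d Q}\right)\right|_Q .
$$
Since $(Q_1,\dots,Q_n)$ is conditionally atomless, each $Q_i$ and hence $Q$ is atomless. The first step is to produce a $G$-distributed $D(I)$-valued process $X'$ on $(\Omega,\mathcal A,Q)$; for this I would invoke the standard fact that an atomless probability space carries a copy of any distribution on a standard Borel space, and $(D(I),\mathcal D_I)$ with the Skorokhod topology is standard Borel. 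Writing $g(\cdot)=\left(\frac{\d G_1}{\d G},\dots,\frac{\d G_n}{\d G}\right)(\cdot)$ as a measurable map $D(I)\to\R^n$, we then have $g(X')|_Q \laweq g|_G \lcx \left(\frac{\d Q_1}{\d Q},\dots,\frac{\d Q_n}{\d Q}\right)|_Q$.

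The key step is then to apply Lemma~\ref{lem:transplant}, but with the source random vector $\mathbf X$ replaced by the $D(I)$-valued random element $X'$. Concretely, one wants: given $g(X')|_Q\lcx\mathbf Y|_Q$ where $\mathbf Y=\left(\frac{\d Q_1}{\d Q},\dots,\frac{\d Q_n}{\d Q}\right)$, and given (via conditional atomlessness, Proposition~\ref{prop:conatomless}) a continuous random variable $U$ on $(\Omega,\mathcal A,Q)$ independent of $\mathbf Y$, there exists a $D(I)$-valued random element $X$ on $(\Omega,\mathcal A,Q)$ with $X|_Q\laweq X'|_Q$ and $g(X)=\E^Q[\mathbf Y\mid X]$. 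The cleanest route is to observe that Lemma~\ref{lem:transplant} is really a statement about a source taking values in a standard Borel space: its proof uses a dyadic basis of $\R^m$ only to discretize the conditional law of $\mathbf Y''$ given the source, and any standard Borel space admits such a countably generated refining sequence of finite partitions. So I would either restate Lemma~\ref{lem:transplant} for a source $\mathbf X$ valued in a Polish/standard Borel space and note the proof is unchanged, or equivalently transport $X'$ to $\R^{\mathbb N}$ via a Borel isomorphism $\iota:D(I)\to\iota(D(I))\subset\R^{\mathbb N}$, apply the finite-dimensional machinery of Lemma~\ref{lem:transplant} coordinatewise (with the refining partitions generating the Borel $\sigma$-field), obtain $\mathbf W$, and set $X=\iota^{-1}(\mathbf W)$, noting that the conditional-expectation identity is preserved because $g$ factors through $\iota$ and $\E^Q[\mathbf Y\mid X]=\E^Q[\mathbf Y\mid\mathbf W]$. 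Once $X$ is constructed, $g(X)=\E^Q[\mathbf Y\mid X]$ together with $G=Q\circ X^{-1}$ feeds directly into Proposition~\ref{prop:prop1} to conclude that $X$ has law $G_i$ under $Q_i$ for each $i$, i.e.\ compatibility.

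The main obstacle is the generalization of Lemma~\ref{lem:transplant}'s construction from a finite-dimensional source to a $D(I)$-valued one: one must be careful that the pointwise limit $\mathbf W=\lim_l\mathbf W_l$ still exists (completeness/separability of the target of $\iota$, or of $D(I)$ under a complete metrization of the Skorokhod topology), that the refining sequence of partitions genuinely generates the Borel $\sigma$-field so that matching conditional probabilities on the partitions forces equality of conditional laws, and that measurability of $X'$ as a $D(I)$-valued map is compatible with the Borel isomorphism $\iota$. All of these are standard facts about standard Borel spaces, so no genuinely new idea is needed beyond recognizing that Lemma~\ref{lem:transplant} is insensitive to the nature of the source space; the rest of the argument is a line-by-line transcription of the proof of Theorem~\ref{th:meas.variable} with $\mathcal F$ replaced by $\mathcal G_I$ and Theorem~\ref{th:newstuff} replaced by Proposition~\ref{prop:prop1}.
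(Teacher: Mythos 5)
Your proposal is correct and takes essentially the same route as the paper: necessity via the process version of Lemma \ref{lem} (through Proposition \ref{prop:prop1}), and sufficiency by observing that the construction in Lemma \ref{lem:transplant} and Theorem \ref{th:meas.variable} uses only the completely metrizable (standard Borel) structure of the source space. The paper implements this by an explicit refining partition of $D(I)$ built from coordinates at rational time points, whereas you phrase it via a Borel isomorphism into $\R^{\mathbb N}$, but this is only a cosmetic difference in how the same extension of Lemma \ref{lem:transplant} is carried out.
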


\begin{proof}
In the proof of Lemma \ref{lem}, no structure of the real line has been used. As a result,  Lemma \ref{lem} can be directly generalized to the case of stochastic processes, with $(G_1,\dots,G_n)\in{\mathcal G}_I^n$ replacing $(F_1,\dots,F_n)\in{\mathcal F}^n$. For the other direction, the proof is similar to that of Theorem \ref{th:meas.variable}.  The only difference is that $(\mathbb R, {\mathcal B}(\mathbb R))$ is replaced by $(D(I), \mathcal D_I)$. A careful check of the proofs of Theorem \ref{th:meas.variable} and of Lemma \ref{lem:transplant} shows, however, that they only rely on the completely metrizable structure of $(\mathbb R, {\mathcal B}(\mathbb R))$ to guarantee the existence and uniqueness of the limit of the constructed sequence of random variables. Since $(D(I), \mathcal D_I)$ is also completely metrizable, the proofs naturally extend to the case of stochastic processes. More precisely, order the rational numbers in $I$ as ${\mathbb Q}\cap I=\{t_1, t_2,\dots\}$. Then we replace the refining partition of the real line $\{[h2^{-\ell}, (h+1)2^{-\ell}), ~h\in\mathbb Z\}_{\ell=0,1,\dots}$ with the refining partition of $D(I)$: $\{X(t_i)\in[h_{\ell,i}2^{-\ell+i}, (h_{\ell,i}+1)2^{-\ell+i}),~ i=1,\dots,\ell,~ h_{\ell,i}\in \mathbb Z\}_{\ell=1,2,\dots}$. The rest follows in the same way as in the proofs of Theorem \ref{th:meas.variable} and of Lemma \ref{lem:transplant}.
\end{proof}

\begin{rem}
The proof of Theorem \ref{th:stopro} sheds light upon a more general result, where $(G_1,\dots,G_n)$ are probability measures defined on a Polish space $Y$ equipped with the Borel $\sigma$-field. In particular, let $\{y_i\}_{i=1, 2, \dots}$ be a dense subset of $Y$, then the sequence of partitions
$$
\left\{\{y\in Y: d(y,y_i)\in[h_{\ell,i}2^{-\ell+i}, (h_{\ell,i}+1)2^{-\ell+i})\}, ~i=1,\dots,\ell,~ h_{\ell,i}=0,1,\dots\right\},
$$
$\ell=0,1,\dots$, can be used to replace $\{[h2^{-\ell}, (h+1)2^{-\ell}), ~h\in\mathbb Z\}_{\ell=0,1,\dots}$ in the proof of Lemma \ref{lem:transplant}. The rest follows exactly in the same way as in that proof. Consequently, a general version of Theorem \ref{th:meas.variable} can be stated using the setting of a Polish space instead of $\mathbb R$. However, due to the lack of a natural order and metric as in $\mathbb R$, a rigorous proof directly for the general case of a Polish space would be notationally heavy and also less intuitive for the readers who are not familiar with Polish spaces. As such, we present Theorem \ref{th:meas.variable} under the setting of $\mathbb R$, which is also the focus of this paper, and use this remark for a discussion for the general setting, after seeing the proof of Theorem \ref{th:stopro}.
\end{rem}

\subsection{Relation to the Girsanov Theorem}\label{sec:girsa}

In this section we investigate how much  the drift of a Brownian motion may vary under a change of measure as in the classic Girsanov Theorem. We keep in mind that, the distribution of a Brownian motion (with respect to its natural filtration) with a deterministic drift process only depends on this drift. On the other hand, Brownian motions with stochastic drift processes are not identified by the distribution of the drift processes. Due to this reason, we consider only Brownian motions with deterministic drift processes here.

Throughout this section,
let $P\in\mathcal M_1$ and $B=\{B_t\}_{t\in [0,T]}$ be a $P$-standard Brownian motion. Furthermore,
for a $[0,T]$-square integrable deterministic process $ \theta=\{\theta_t\}_{t\in [0,T]}$,
define
$$ \frac{\d Q_\theta}{\d P}=e^{\int_0^T \theta_t\d  B_t-  \frac{1}{2}\int_0^T\theta^2_t \d t},$$
and let $G_\theta$ be the distribution measure of a Brownian motion with drift process $\theta$.
The Girsanov Theorem says that  $B$ is a Brownian motion with drift process $\theta$ and volatility $1$ under $Q_\theta$ (certainly, this statement is also true for adapted drift processes).  Thus, $(P,Q_\theta)$ and $(G_0,G_\theta)$ are compatible.
 It is clear that distribution measures of Brownian motions with different non-random volatility terms are mutually singular, and hence they are not compatible with $(P,Q_\theta)$.
A next question is whether there exists a $P$-standard Brownian motion which has a deterministic drift process $\mu=\{\mu_t\}_{t\in [0,T]}$ under $Q_\theta$.
We are interested in the values of $\mu$ such that $(G_0,G_{\mu})$ and $(P,Q_\theta)$ above are compatible. Here we do not assume that $(P,Q_\theta)$ is conditionally atomless, which means that there might not be any random source other than $B$.

\begin{theorem}\label{thm:girsa}
Suppose that  the deterministic processes $\theta=\{\theta_t\}_{t\in [0,T]}$ and $\mu=\{\mu_t\}_{t\in [0,T]}$ are $[0,T]$-square integrable, and $\mu_t\ne0$ almost everywhere on $[0,T]$.  $(P,Q_\theta)$ and  $(G_0,G_{\mu})$ are compatible if and only if
$$\int_0^T \mu_t^2\d t\le \int_0^T \theta_t^2\d t.$$
\end{theorem}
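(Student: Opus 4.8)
The plan is to prove the two implications by quite different means. The ``only if'' direction will follow from the general necessity of the heterogeneity order (the process analogue of Lemma~\ref{lem}, equivalently Proposition~\ref{prop:prop1} combined with Lemma~\ref{lem:lem2}) together with a short lognormal convex-order computation. The ``if'' direction instead requires a genuinely pathwise construction of the witness process, because $(P,Q_\theta)$ is \emph{not} assumed conditionally atomless and Theorem~\ref{th:stopro} therefore does not apply.

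\emph{Necessity.} Since the Girsanov density $\d Q_\theta/\d P$ is strictly positive, $P$ and $Q_\theta$ are equivalent; in particular $P\ll Q_\theta$. If $(P,Q_\theta)$ and $(G_0,G_\mu)$ are compatible, let $X$ be a witnessing process. Applying Proposition~\ref{prop:prop1} with $n=2$, $(Q_1,Q_2)=(P,Q_\theta)$ and reference measure $Q=Q_\theta$, and noting $G_0\ll G_\mu$ (Cameron--Martin), one gets $\frac{\d G_0}{\d G_\mu}(X)=\E^{Q_\theta}[\frac{\d P}{\d Q_\theta}\mid\sigma(X)]$, and hence, by conditional Jensen / Lemma~\ref{lem:lem2},
$$\left.\frac{\d G_0}{\d G_\mu}\right|_{G_\mu}\ \lcx\ \left.\frac{\d P}{\d Q_\theta}\right|_{Q_\theta}.$$
I would then identify both sides. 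Writing $\tilde B_t:=B_t-\int_0^t\theta_s\,\d s$, which is a standard Brownian motion under $Q_\theta$, one has $\frac{\d P}{\d Q_\theta}=\exp\!\big(-\int_0^T\theta_t\,\d\tilde B_t-\tfrac{1}{2} a^2\big)$ with $a^2:=\int_0^T\theta_t^2\,\d t$, so under $Q_\theta$ this variable has the law of $e^{-a\zeta-a^2/2}$ for a standard normal $\zeta$. Likewise $\frac{\d G_0}{\d G_\mu}$ has, under $G_\mu$, the law of $e^{-b\zeta-b^2/2}$ with $b^2:=\int_0^T\mu_t^2\,\d t$. Evaluating the convex order against $x\mapsto x^2$ gives $e^{b^2}\le e^{a^2}$, i.e.\ $\int_0^T\mu_t^2\,\d t\le\int_0^T\theta_t^2\,\d t$.

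\emph{Sufficiency.} Suppose $b\le a$; since $\mu_t\ne 0$ a.e.\ we have $b>0$, hence $a>0$. The point is that, at the level of the white noise driving $B$, this is a purely Hilbert-space statement. Concretely, I would first choose a linear isometry $J:L^2[0,T]\to L^2[0,T]$ (into, not necessarily onto) with $J^*\theta=\mu$; this is possible precisely because $\|\mu\|_{L^2}=b\le a=\|\theta\|_{L^2}$ and $L^2[0,T]$ is infinite-dimensional, and one may take explicitly
$$J\phi=\tfrac{b}{a}\langle\phi,\mu/b\rangle\,(\theta/a)+\sqrt{1-b^2/a^2}\,\langle\phi,\mu/b\rangle\,g+K\phi^{\perp},$$
where $g$ is a unit vector orthogonal to $\theta$, $\phi^{\perp}:=\phi-\langle\phi,\mu/b\rangle(\mu/b)$, and $K$ is any isometry of $\{\mu\}^{\perp}$ into the orthogonal complement of $\mathrm{span}\{\theta,g\}$. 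Then set $W_t:=\int_0^T\big(J\mathbf{1}_{[0,t]}\big)(s)\,\d B_s$ and let $X$ be a continuous modification of $(W_t)_{t\in[0,T]}$. Under $P$, $W$ is centred Gaussian with covariance $\langle J\mathbf{1}_{[0,t]},J\mathbf{1}_{[0,s]}\rangle=\langle\mathbf{1}_{[0,t]},\mathbf{1}_{[0,s]}\rangle=t\wedge s$, so $X$ is a standard Brownian motion under $P$, i.e.\ has law $G_0$. Under $Q_\theta$, substituting $B=\tilde B+\int_0^\cdot\theta_s\,\d s$ gives
$$W_t=\int_0^T\big(J\mathbf{1}_{[0,t]}\big)(s)\,\d\tilde B_s+\big\langle J\mathbf{1}_{[0,t]},\theta\big\rangle=\int_0^T\big(J\mathbf{1}_{[0,t]}\big)(s)\,\d\tilde B_s+\int_0^t\mu_s\,\d s,$$
using $\langle J\mathbf{1}_{[0,t]},\theta\rangle=\langle\mathbf{1}_{[0,t]},J^*\theta\rangle=\langle\mathbf{1}_{[0,t]},\mu\rangle$. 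Since $\tilde B$ is a $Q_\theta$-Brownian motion, the first term is again a standard Brownian motion in $t$, so under $Q_\theta$ the process $X$ is a Brownian motion with deterministic drift $\mu$, i.e.\ has law $G_\mu$. Hence $(P,Q_\theta)$ and $(G_0,G_\mu)$ are compatible.

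The hard part is the sufficiency direction: without conditional atomlessness the witness process must be manufactured from $B$ alone, and the key is the reduction above to constructing an $L^2[0,T]$-isometry that carries the direction of $\theta$ onto that of $\mu$ with contraction factor $b/a$ — solvable exactly when $b\le a$, and crucially using that $L^2[0,T]$ is infinite-dimensional so that there is ``room'' for the auxiliary isometry $K$. The remaining items (measurability and continuity of $X$, and the precise form of the Cameron--Martin densities) are routine but should be handled with some care.
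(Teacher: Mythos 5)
Your proposal is correct. The necessity half is essentially the paper's own argument with the roles of the reference measures swapped: the paper takes $(G_0,P)$ as references, identifies $\frac{\d G_\mu}{\d G_0}(B)$ via Girsanov applied to $Q_\mu$, and tests the resulting convex order against $x\mapsto x^2$; you take $(G_\mu,Q_\theta)$, identify both densities by Cameron--Martin, and run the same second-moment computation, so the two are interchangeable. The sufficiency half is where you genuinely diverge. The paper builds the witness concretely as a deterministically time-changed and rescaled version of $B$ itself, $W_t=\int_0^t\beta_{\alpha_s}\,\d B_{\alpha_s}$ with $\alpha_t=\inf\{r\ge 0:\int_0^r\theta_s^2\,\d s=\int_0^t\mu_s^2\,\d s\}$ and $\beta_{\alpha_t}=\theta_{\alpha_t}/\mu_t$, checking the covariance under $P$ and reading off the drift under $Q_\theta$ from $\theta_{\alpha_t}^2\,\d\alpha_t=\mu_t^2\,\d t$. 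Your construction replaces this with an abstract $L^2[0,T]$ isometry $J$ with $J^*\theta=\mu$ and $W_t=\int_0^T(J\mathbf 1_{[0,t]})(s)\,\d B_s$; the paper's witness is the special case $J\mathbf 1_{[0,t]}=\beta\,\mathbf 1_{[0,\alpha_t]}$. What your route buys: the only input is $0<\|\mu\|_{2}\le\|\theta\|_{2}$ (and your explicit $J$ does satisfy $J^*J=\mathrm{I}$ and $J^*\theta=\mu$, with $g$ and $K$ available because $L^2[0,T]$ is infinite-dimensional), so it sidesteps the division by $\theta_{\alpha_t}$ and the behaviour of the time change where $\theta$ vanishes on a set of positive measure, a point at which the paper's explicit formula needs extra care; it also makes transparent that no randomness beyond $B$ is used, which is exactly what is required since $(P,Q_\theta)$ is not assumed conditionally atomless. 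What the paper's route buys is a fully explicit pathwise formula for the witness (compare the constant-drift case $W_t=\frac{a}{b}B_{(b/a)^2 t}$ after Corollary \ref{prop:girsa}). Two small points you should make explicit when writing this up: the continuous modification of $W$ chosen under $P$ also works under $Q_\theta$ because $P$ and $Q_\theta$ are equivalent, so the null set on which it differs is negligible for both; and the decomposition $\int_0^T f\,\d B=\int_0^T f\,\d\tilde B+\int_0^T f(s)\theta_s\,\d s$ for deterministic $f\in L^2[0,T]$ is the (routine, but worth one line) consistency of Wiener integrals under the equivalent change of measure.
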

\begin{proof}
(i) \underline{Necessity.}
 By the Girsanov Theorem, we know that $(G_0,G_\mu)$ and $(P,Q_\mu)$ are compatible. Using  Proposition \ref{prop:prop1} for $n=2$, we have
$$\frac{\d G_\mu}{\d G_0}(B)=\E\left[\left.\frac{\d Q_\mu}{\d P}\right|\sigma(B)\right] =e^{\int_0^T \mu_t\d  B_t-  \frac{1}{2}\int_0^T\mu^2_t \d t}.$$

Suppose that $(P,Q_\theta)$ and  $(G_0,G_{\mu})$ are compatible. Note that
$$e^{\int_0^T \mu_t\d  B_t-  \frac{1}{2}\int_0^T\mu^2_t \d t}\Big|_P\laweq\frac{\d G_\mu}{\d G_0}(B)\Big|_{P} \laweq\frac{\d G_\mu}{\d G_0}\Big|_{G_0}.$$

By Theorem \ref{th:stopro}, we have $$e^{\int_0^T \mu_t\d  B_t-  \frac{1}{2}\int_0^T\mu^2_t \d t} \Big|_P \lcx \frac{\d Q_\theta}{\d P}\Big|_P \laweq e^{\int_0^T \theta_t\d  B_t-  \frac{1}{2}\int_0^T\theta^2_t \d t}\Big|_P.$$

Applying the convex function $x\mapsto x^2$, we have
$$e^{\int_0^T \mu_t^2\d t}=\E[(e^{\int_0^T \mu_t\d  B_t-  \frac{1}{2}\int_0^T\mu^2_t \d t})^2]\le \E[(e^{\int_0^T \theta_t\d  B_t-  \frac{1}{2}\int_0^T\theta^2_t \d t})^2]=e^{\int_0^T \theta_t^2\d t}$$
and hence $\int_0^T \mu_t^2\d t\le \int_0^T \theta_t^2\d t$.

(ii) \underline{Sufficiency.}
Suppose $\int_0^T \mu_t^2\d t\le \int_0^T \theta_t^2\d t$. Define a deterministic process $\alpha=\{\alpha_t\}_{t\in [0,T]}$ by
$$
\alpha_t=\inf\left\{r\ge 0: \int_0^r \theta_s^2\d s =\int_0^t \mu_s^2\d s\right\}.
$$

It is easy to see that $\alpha_t$ is  strictly increasing  in $t$,   $\alpha_T\le T$, and
furthermore,
\begin{equation}
\label{eq:girsa2} \theta^2_{\alpha_t} \d \alpha_t =\mu_t^2 \d t.
\end{equation}

Let a stochastic process $\hat B=\{\hat B_t\}_{t\in [0,T]}$ be given by $\d \hat B_t=\d B_t - \theta_t\d  t$. By the Girsanov Theorem,
$\hat B$ is a $Q_\theta$-standard Brownian motion.
Define $$W_t= \int_0^{t} \beta_{\alpha_s} \d B_{\alpha_s},~~t\in[0,T],$$
where $\beta=\{\beta_s\}_{s\in [0,\alpha_T]}$ is given by $\beta_{\alpha_t}=\frac{\theta_{\alpha_t}}{\mu_t}$, $t\in [0,T]$.
$W=\{W_t\}_{t\in [0,T]}$ is clearly a Gaussian process,  $\E^P[W_t]=0$, and
$$\E^P[W_tW_s]=\E^P[W_s^2]= \int_0^s \frac{\theta^2_{\alpha_u}}{\mu^2_u}\d \alpha_u= s,~~0\le s<t\le T.$$

Therefore, $W$ is a $P$-standard Brownian motion. Furthermore,  for $t\in [0,T],$
\begin{align*}
W_t= \int_0^{t} \beta_{\alpha_s} \d B_{\alpha_s}                 &= \int_0^{t} \beta_{\alpha_s} (\d \hat B_{\alpha_s}+\theta_{\alpha_s} \d \alpha_s)\\
 &=\int_0^{t} \beta_{\alpha_s} \d \hat B_{\alpha_s}+  \int_0^{t} \beta_{\alpha_s} \theta_{\alpha_s} \d \alpha_s\\
  &=\int_0^{t} \beta_{\alpha_s} \d \hat B_{\alpha_s}+  \int_0^{t}\mu_s\d s,
  \end{align*}
  where the last equality is due to \eqref{eq:girsa2}.
As $\int_0^{t} \beta_{\alpha_s} \d \hat B_{\alpha_s}$ defines a $Q_\theta$-standard Brownian motion, we conclude that $W$ has distribution $G_\mu$ under $Q_\theta$, and hence $(P,Q_\theta)$ and  $(G_0,G_{\mu})$ are compatible.
\end{proof}

We list Theorem \ref{thm:girsa} for the case of a constant drift term below, and look more closely at the construction of the  desired stochastic process.
\begin{cor}\label{prop:girsa}
Let $\theta_t=a$ and $\mu_t=b$, $t\in [0,T]$, where $a, b$ are two constants, and $b\ne 0$.
 $(P,Q_\theta)$ and  $(G_0,G_{\mu})$ are compatible if and only if $b^2 \le a^2$.
\end{cor}

If $b^2 \le a^2$, the process which  has distribution $G_0$ under $P$ and distribution $G_\mu$ under $Q_\theta$ can be written in a simple explicit form.
Let $$W_t=\frac{a}{b}B_{(\frac{b}{a})^2t},~~t\in[0,T].$$
It is clear that $W=\{W_t\}_{t\in [0,T]}$ is a $P$-Brownian motion. Furthermore,
$$W_t=\frac{a}{b}B_{(\frac{b}{a})^2t}=\frac{a}{b}\left(\hat B_{(\frac{b}{a})^2t}+ a \frac{b^2}{a^2} t\right)= \frac{a}{b}\hat B_{(\frac{b}{a})^2t}+ b t,~~t\in [0,T].$$

In this example, it is clear that $0<b^2\le a^2$ is essential; otherwise $W$ will not be well-defined.

\section{Application to a portfolio optimization problem}\label{sec:5}


 Let $P, Q, R$ be three probability measures defined on $(\Omega,\mathcal A)$, and $F, G$ be two probability distributions on $\mathbb R$.
 In this section, we investigate the optimization problem of the type
 \begin{equation}\label{eq:optimization}
 \begin{array}{c}
 \min_{H\in \mathcal F} H([a,\infty))\\
 \text{s.t. } (P, Q, R) \text{ is compatible with } (F, G, H).
 \end{array}
\end{equation}

Problem \eqref{eq:optimization} is motivated by 
portfolio selection under multiple constraints.
In a classic complete-market portfolio selection problem, an investor optimizes an objective function under the physical measure $P$ (e.g.~expected utility) subject to a budget constraint which is evaluated under a risk-neutral  measure $Q$.
For this problem, one obtains an optimal position with loss random variable $\xi^*$, and we denote by $F$ (resp.~$G$) the distribution of $\xi^*$ under $P$ (resp.~$Q$).
The optimal position $\xi^*$ may not be unique but its distributions under $P$ and $Q$ are typically unique (see e.g.~Chapter 3 of \cite{FS16}).
We assume, in addition, that there is a regulatory requirement set by a regulator using a measure $R$ which may not be the same as $P$ due to extensive usage of stress-testing in calculating regulatory capital (see e.g.~\cite{CF17}).
A typical regulatory requirement is using the Value-at-Risk under stressed scenarios (see e.g.~\cite{MFE15}), that is, a loss random variable $\xi$ has to satisfy $R(\xi\ge a)\le p_0$ where $a \in \R$ is the capital level of the investor and $p_0$ is a pre-specified probability level.
The investor needs to determine whether a position with her desired distributions under $P$ and $Q$ can satisfy this constraint. That is,
 to determine the existence of a random variable $\xi$, such that
 $$
\xi|_P\sim F, \quad \xi|_Q\sim G, \text{ and } R(\xi\geq a)\leq p_0.
 $$

 Using the framework of this paper, this is  to determine the existence of a probability distribution $H$, such that $(P, Q, R)$ is compatible with $(F, G, H)$, and $H([a,\infty))\leq p_0$.
 It is obvious that the optimization problem \eqref{eq:optimization} directly addresses the above issue.

To study \eqref{eq:optimization}, we assume that $(P, Q, R)$ are conditionally atomless and $Q, R\ll P$. By Theorem \ref{th:meas.variable}, the compatibility in \eqref{eq:optimization} is equivalent to the heterogeneity order $(F, G, H)\lv (P, Q, R)$. An application of Lemma \ref{lem:lem33} (iii) shows this is equivalent to
$$
\left.\left(\frac{\d F}{\d F'}, \frac{\d G}{\d F'}, \frac{\d H}{\d F'}\right)\right|_{F'} \lcx \left.\left(1, \frac{\d Q}{\d P}, \frac{\d R}{\d P}\right)\right|_P
$$
for some $F'$ such that $F, G, H\ll F'$. As $\left.\frac{\d F}{\d F'}\right|_{F'}\lcx 1$, $F'$ must be the same as $F$. Hence, an equivalent condition is  
$$
\left.\left(\frac{\d G}{\d F}, \frac{\d H}{\d F}\right)\right|_F \lcx \left.\left(\frac{\d Q}{\d P}, \frac{\d R}{\d P}\right)\right|_P.
$$

For simplicity, we assume $\frac{\d G}{\d F}$ is $F$-a.e.~injective. 
As such, we can write
$ H([a,\infty))=H(\{t\in \R: \frac{\d G}{\d F}(t)\in D_a\})$ for some measurable set $D_a\subseteq \mathbb R$.
By Lemma \ref{lem:lem2}, the compatibility holds if and only if there exists some probability space $(\Omega', \mathcal A', P')$ and random variables $X', Y', Z', W'$ defined on that space, such that
$$
(X', W')|_{P'}\laweq \left.\left(\frac{\d G}{\d F}, \frac{\d H}{\d F}\right)\right|_{F},~ (Y', Z')|_{P'}\laweq \left.\left(\frac{\d Q}{\d P},\frac{\d R}{\d P}\right)\right|_P,
$$
and
$$
(X', W')= \E^{P'}[(Y', Z')|X'].
$$

 The relation $\E^{P'}[(Y', Z')|X', W']=\E^{P'}[(Y', Z')|X']$ is used above, which is guaranteed by $(X', W')|_{P'}\laweq (\frac{\d G}{\d F}, \frac{\d H}{\d F})|_{F}$ and that $\frac{\d G}{\d F}$ is injective.
We take $(\Omega', \mathcal A', P')$  as fixed from now on, since only distributions matter in our optimization problem.
We have
\begin{align*}
H([a,\infty))&=\E^F\left[\frac{\d H}{\d F} \id_{\{\frac{\d G}{\d F} \in D_a\}}\right]=\E^{P'}[W' \id_{\{X'\in D_a\}}]\\
&=\E^{P'}[Z' \id_{\{X'\in D_a\}}]=\E^{P'}[\E^{P'}[Z' \id_{\{X'\in D_a\}}|Y']].
\end{align*}

Hence, we relax the reliance of $W'$ in the  optimization problem, and \eqref{eq:optimization} can be rewritten as
\begin{equation}\label{eq:altopt}
\min_{(X',Y',Z')} \E^{P'}[Z' \id_{\{X'\in D_a\}}],
\end{equation}
where the minimum is taken subject to the constraints
\begin{equation*} 
  X'|_{P'}\laweq \left.\frac{\d G}{\d F}\right|_{F},~~ (Y', Z') |_{P'}\laweq  \left.\left(\frac{\d Q}{\d P},\frac{\d R}{\d P}\right)\right|_P,  \mbox{~~and~~}
 X'= \E^{P'}[Y'|X'].
 \end{equation*}

Under $P'$, given the joint distribution of $X'$ and $Y'$, the conditional distributions $X'|Y'=y$ and $Z'|Y'=y$ are both fixed for $P'$-almost every $y$. Hence, by the Hardy-Littlewood inequality (in the form of Remark 3.25 of \cite{R13}), the sub-problem, for fixed $(X',Y')$,
$$
\min_{Z'\in K_y}\E^{P'}[Z' \id_{\{X'\in D_a\}}|Y'=y],
$$
where $K_y$ is the set of all random variables $Z'$ satisfying $ (Z'|Y'=y)\big|_{P'}\laweq  (\frac{\d R}{\d P}|\frac{\d Q}{\d P} =y) \big|_{P}$, has a simple solution such that $Z'$ given $Y'=y$ and $\id_{\{X\in D_a\}}$ given $Y'=y$ are counter-monotonic. Consequently, we have
$$
\min_{Z'\in K_y}\E^{P'}[Z' \id_{\{X'\in D_a\}}|Y'=y]=\int_0^{p_{X',Y'}(y)}f(x|y)\d x,
$$
where $f(\cdot|y)$ is the left-quantile of the distribution function of $\frac{\d R}{\d P} $ given $ \frac{\d Q}{\d P} =y$ under $P$, and
\begin{equation*}
p_{X',Y'}(y)=P'(X'\in D_a|Y'=y).
\end{equation*}

Define a function $\Phi$ on $(\mathcal L^0(\Omega';\mathbb R))^2$ by
$$\Phi(X',Y') = \E^{P'}\left[
 \int_0^{p_{X',Y'}(Y')}f (x|Y')\d x \right]=  \int_0^\infty
 \int_0^{p_{X',Y'}(y)}f (x|y)\d x \d F_Y(y),
$$
where $F_Y$ is the distribution of $\frac{\d Q}{\d P}\big |_P$.
Clearly, $\Phi$ is determined by the joint distribution of $(X',Y')$ under $P'$.
By this argument, we relax the reliance of $Z'$ in the optimization problem \eqref{eq:altopt}.
To summarize, the results in this paper allow us to transform the original optimization problem \eqref{eq:optimization} into
\begin{equation}\label{eq:optequiv}
\min_{(X,Y)\in K} \Phi(X,Y)
\end{equation}
where $K$ is the set of all random variables $(X,Y)\in(\mathcal L^0(\Omega';\mathbb R))^2$ satisfying
\begin{equation} \label{eq:cons3}
X|_{P'}\laweq \left.\frac{\d G}{\d F}\right|_{F},~~Y|_{P'}\laweq \left.\frac{\d Q}{\d P}\right|_{P}, \mbox{~~and~~} \E^{P'}[Y|X]=X.
\end{equation}
\begin{rem}
Problem \eqref{eq:optequiv} can be seen as a generalized martingale mass transportation problem (e.g.~\cite{BHP13}).
 In a classic two-period martingale mass transportation  problem, the objective is to minimize $\E^\p [\phi(X,Y)]$ for some cost function $\phi: \R^2\to \R$ over $(X,Y)$
 where the distributions of $X$ and $Y$ under some measure $\p$  are known, and $\E^\p [Y|X]=X$.
 Note that our constraints \eqref{eq:cons3} are the same as in the classic problem. The only difference between \eqref{eq:optequiv} and the classic problem is that our objective $\Phi$
does not have the form of an expected value of $\phi(X,Y)$. 
Rather, $\Phi$ is determined by the joint distribution of $(X,Y)$.
 Hence, $\Phi$ can be seen as a generalized cost functional in a mass transportation problem.
\end{rem}


Meanwhile, a lower bound for the optimal value of (\ref{eq:optequiv}) can be obtained by considering an optimization problem with a weaker constraint:
\begin{equation}\label{eq:weakopt}
\min_{Y\in K'}\Phi(X,Y),
\end{equation}
where $K'$ is the set of all random variables $Y$ satisfying $Y|_{P'}\laweq \frac{\d Q}{\d P}\big|_{P}$, and
$$
\E^{P'}[Y \id_{\{X\in D_a\}}]=\E^{P'}[X \id_{\{X\in D_a\}}].
$$
Note that since only the joint distribution of $X$ and $Y$ matters, here we take $X$ as given and reduce the problem to an optimization solely over $Y$. 
  Denote by $Y^*$ an optimal solution of (\ref{eq:weakopt}), and let ${p}^*=p_{X,Y^*}$. Then for any two points $y_1$, $y_2$ and $\lambda\in (0,1)$ satisfying $p^*(y_1), p^*(y_2), p^*(\lambda y_1+(1-\lambda) y_2)\in(0,1)$, 
  a variational argument leads to
the first order condition 
\begin{equation}\label{eq:firstorder}
f(p^*(\lambda y_1+(1-\lambda)y_2)|\lambda y_1+(1-\lambda)y_2)=\lambda f(p^*(y_1)|y_1)+(1-\lambda)f(p^*(y_2)|y_2),
\end{equation}
which implies that $f(p^*(y)|y)$ must be linear in $y$ when $p^*(y)$ is between 0 and 1. Combining this with the constraints
$$
\E^{P'}[p^*(Y^*)]=P'(X\in D_a)=F([a,\infty))
$$
and
$$
\E^{P'}[Y^*p^*(Y^*)]=\E^{P'}[Y^* \id_{\{X\in D_a\}}]=\E^{P'}[X \id_{\{X\in D_a\}}]
$$
generically gives a unique solution, which is a local minimum by checking the second order condition. 
Note that similar to (\ref{eq:altopt}), (\ref{eq:weakopt}) can be rewritten as $\min \E^{P'}[Z \id_{\{X\in D_a\}}]$, where the minimum is taken over all the $(Y, Z)$ such that $(Y, Z)|_{P'}\laweq (\frac{\d Q}{\d P},\frac{\d R}{\d P})|_{P}$ and $\E^{P'}[Y \id_{\{X\in D_a\}}]= \E^{P'}[X \id_{\{X\in D_a\}}]$.
As the objective $\E^{P'}[Z \id_{\{X\in D_a\}}]$ is linear and the feasible region is convex 
(with respect to mixture),
 the local minimum must also be the global minimum for the optimization problem (\ref{eq:weakopt}), providing a lower bound for the optimal value in (\ref{eq:optequiv}).

In some special cases, the above lower bound can be analytically calculated, and one can construct random variables satisfying the original constraints that attain this bound. As a result, (\ref{eq:optequiv}) and (\ref{eq:weakopt}) have the same optimal value, and the original problem \eqref{eq:optimization} is completely solved.
  We give one simple example. Let $F$ and $G$ be supported on $[0,1]$, $a\geq \frac{1}{2}$, and $\frac{\d G}{\d F}$ follows a symmetric triangular distribution under $F$ and is decreasing: $\frac{\d G}{\d F}(t)=2-\sqrt{2t}$ for $t\in[0,1/2)$ and $\frac{\d G}{\d F}(t)=\sqrt{2-2t}$ for $t\in[1/2,1]$. $\frac{\d Q}{\d P}$ follows a uniform distribution on $[0,2]$, and the conditional distribution of $\frac{\d R}{\d P}$ given $\frac{\d Q}{\d P}$ is uniform with linear bounds: $\frac{\d R}{\d P}\big|\frac{\d Q}{\d P}=y\sim \text{Unif}([cy-b, cy+b])$ for some constants $b$ and $c$. In this case we construct $(X,Y)$ such that $X|_{P'}\laweq \frac{\d G}{\d F}\big|_{F}$, and
$$
Y|X=x\sim\begin{cases}
\text{Unif}([0,2x]) & x\in[0,1)\\
\text{Unif}([2x-2,2]) & x\in[1,2].
\end{cases}
$$
$p_{X,Y}(y)$ can be derived and then it can be verified that the first order condition (\ref{eq:firstorder}) is met. Consequently, the dependence given by $(X, Y)$ is indeed optimal for problem (\ref{eq:optequiv}), and the corresponding optimal value can be calculated. We omit the detail as the rest is purely computational.

\subsection*{Acknowledgements}
The authors are grateful to the Editor, the Associate Editor,  two referees, Michel Baes, Fabio Bellini, Paul Embrechts, Fabio Maccheroni, Tiantian Mao, Alfred M\"uller, Marcel Nutz, Jan Obloj, Sidney Resnick, Ludger R\"uschendorf, Alexander Schied and Xiaolu Tan for various helpful suggestions and discussions on an earlier version of the paper.
J.~Shen  acknowledges financial support from the China Scholarship Council.
Y.~Shen and R.~Wang acknowledge financial support by the Natural Sciences and Engineering Research Council (NSERC 2014-04840, RGPIN-2018-03823, RGPAS-2018-522590) of Canada.
R.~Wang is also grateful to FIM at ETH Zurich for supporting his visit in 2017, during which part of this paper was written.

%
%

%
%
%

\appendix\normalsize
\allowdisplaybreaks
\section{Appendix}

\subsection{Details in Example \ref{ex:ct1}}
\label{app:a1}

Note that $\frac{\d Q_1}{\d Q_2}$  is uniform on $[0,2]$ under $Q_2=\lambda$, and $\frac{\d F_1}{\d F_2}$ is also uniform on $[0,2]$ under $F_2=\lambda$.
Thus,
$$\left.\left(\frac{\d F_1}{\d \lambda},\frac{\d F_2}{\d \lambda}\right)\right|_\lambda \laweq \left. \left(\frac{\d Q_1}{\d \lambda},\frac{\d Q_2}{\d \lambda}\right)\right|_\lambda.  $$
Therefore, $(F_1,F_2)\lv (Q_1,Q_2)$.

Next, we will see that $(Q_1,Q_2)$ and $(F_1,F_2)$ are not compatible.
Suppose for the purpose of contradiction that $(Q_1,Q_2)$ and $(F_1,F_2)$ are compatible. By Theorem \ref{th:newstuff}, there exists a random variable  $X$ in $(\Omega,\mathcal A)$ with a uniform distribution on $[0,1]$  under $Q_2=\lambda$ such that
$$\frac{\d F_1}{\d \lambda}(X)=\E^{\lambda}\left[\left.\frac{\d Q_1}{\d \lambda}\right| X\right].  $$
In addition,
$$\left.\frac{\d F_1}{\d \lambda}(X)\right|_{\lambda}\laweq\left. \frac{\d Q_1}{\d \lambda}\right|_{\lambda},$$ and therefore,
$$\frac{\d F_1}{\d \lambda}(X)= \frac{\d Q_1}{\d \lambda},~~\mbox{$\lambda$-almost surely.}  $$

From the definition of $F_1$ and $Q_1$, we have, for $\lambda$-almost surely $t\in [0,1]$,
$|4X(t)-2|=2t.$
It follows that $X(t)=(t+1)/2$ or $X(t)=(1-t)/2$ for all $t\in [0,1]$.
Write $$A=\left\{t\in [0,1]: X(t)=\frac{t+1}{2}\right\},~ B=\left\{t\in [0,1]: X(t)=\frac{1-t}{2}\right\}$$
  and
  $$C=\left\{\frac{1-t}{2}:t\in A\right\}.$$

  As $X$ is $\mathcal B([0,1])$-measurable and has distribution $F_2$ under $\lambda$, we have $A,B \in \mathcal B([0,1])$ and $\lambda(A)=\lambda(B)=1/2$.
Note that $\lambda(C)=1/4$; however $\lambda (C\cap X(A\cup B))=0$, contradicting the fact that $X$ has  a uniform distribution on $[0,1]$ under $\lambda$.

\subsection{Proof of Theorem \ref{th:almost:compatible}}\label{app:a2}
\begin{proof}
\underline{Necessity.} Assume that $(Q_1,\dots,Q_n)$ and $(F_1,\dots,F_n)$ are almost compatible. This means that for any $\epsilon>0$, there exists $(F_{1,\epsilon},\dots,F_{n,\epsilon})$ such that $D_{\mathrm{KL}}(F_{i,\epsilon}\Vert F_i)<\epsilon$ for $i=1,\dots,n$, and $(Q_1,\dots,Q_n)$ is compatible with $(F_{1,\epsilon},\dots,F_{n,\epsilon})$. Define probability measures
 $$
 F_\epsilon=\frac{1}{n}(F_{1,\epsilon}+\dots+F_{n,\epsilon}),
 $$
 $$
 F=\frac{1}{n}(F_{1}+\dots+F_{n})
 $$
 and
 $$
 Q=\frac{1}{n}(Q_1+\dots+Q_n).
 $$
 Note that the distribution of $X_\epsilon$ under $Q$ is $F_\epsilon$, where $X_\epsilon$ is the random variable defining the compatibility between $(Q_1,\dots,Q_n)$ and $(F_{1,\epsilon},\dots,F_{n,\epsilon})$. Moreover, for $i=1,\dots,n$, we have $F_{i,\epsilon}\ll F_\epsilon$, $Q_i\ll Q$, $\d F_{i,\epsilon}/\d F_\epsilon\leq n$ and $\d Q_i/\d Q\leq n$.
 For $\epsilon>0$, by Lemma \ref{lem},
$$
\left.\left(\frac{\d F_{1,\epsilon}}{\d F_\epsilon},\dots, \frac{\d F_{n,\epsilon}}{\d F_\epsilon} \right)\right|_{F_\epsilon}\lcx \left. \left(\frac{\d Q_1}{\d Q},\dots,\frac{\d Q_n}{\d {Q}}\right)\right|_{Q}.
$$
As a result, for any convex function $f:{\mathbb R}^n\to \mathbb R$,
$$
\E^{F_\epsilon}\left[f\left(\frac{\d F_{1,\epsilon}}{\d F_\epsilon},\dots, \frac{\d F_{n,\epsilon}}{\d F_\epsilon}\right) \right]\leq \E^Q\left[f\left(\frac{\d Q_1}{\d Q},\dots,\frac{\d Q_n}{\d {Q}}\right)\right].
$$

For $i=1,\dots,n$,
\begin{equation}\label{eq:converge}
\frac{\d F_{i,\epsilon}}{\d F_\epsilon}=\frac{\d F_i}{\d F}\frac{\d F_{i,\epsilon}/\d F_i}{\d F_\epsilon/\d F}.
\end{equation}
Since $D_{\mathrm{KL}}(F_{i,\epsilon}\Vert F_i)$ converges to 0, by Pinsker's inequality,
 $F_{i,\epsilon}$ converges to $F_i$ in total variation, which is equivalent to $\d F_{i,\epsilon}/\d F_i$ converging in $L^1|_{F_i}$ to $1$. Hence for any sequence $\epsilon_m\downarrow 0$,
 there exists a subsequence, which we still denote as $\epsilon_m\downarrow 0$ by a slight abuse of notation, such that $\d F_{i,\epsilon_m}/\d F_i$ converge to $1$ $F_i$-almost surely. It is easy to check that we have $\d F_{\epsilon_m}/\d F$ converge to 1 as well. (\ref{eq:converge}) then implies that
 \begin{equation}\label{eq:almost:sure:partial}
 \frac{\d F_{i,\epsilon_m}}{\d F_{\epsilon_m}}\to \frac{\d F_i}{\d F} \quad F_i\text{-almost surely}.
 \end{equation}

 On any set $B\in \mathcal B(\mathbb R)$ such that $F_i(B)=0$ but $F(B)>0$, suppose $\d F_{i,\epsilon}/\d F_{\epsilon}$ does not converge to $\d F_i/\d F=0$ in probability under $F|_B$, the measure $F$ restricted on $B$. Then there exists $\delta>0$ and a subsequence of $\epsilon_m$ (again denoted as $\epsilon_m$), such that $P^{F|_B}(\d F_{i,\epsilon_m}/\d F_{\epsilon_m}>\delta)\geq c$ for some constant $c>0$. Since $F_{\epsilon_m}$ converges to $F$ in total variation, for $m$ large enough, $P^{F_{\epsilon_m}|_B}(\d F_{i,\epsilon_m}/\d F_{\epsilon_m}>\delta)\geq c/2$. Hence $F_{i,\epsilon_m}(B)\geq \delta P^{F_{\epsilon_m}|_B}(\d F_{i,\epsilon_m}/\d F_{\epsilon_m}>\delta)\geq \frac{c\delta}{2}$, which contradicts the fact that $F_{i,\epsilon_m}$ converges to $F_i$ in total variation. We conclude that $\d F_{i,\epsilon}/\d F_{\epsilon}$ converge to $\d F_i/\d F=0$ in probability under $F$ on set $\{\d F_i/\d F=0\}$. Combining this result with (\ref{eq:almost:sure:partial}) and taking a further subsequence allows us to replace the $F_i$-almost sure convergence in (\ref{eq:almost:sure:partial}) by $F$-almost sure convergence.

 For any convex function $f:{\mathbb R}^n\to \mathbb R$,
$$
\E^{F_{\epsilon_m}}\left[f\left(\frac{\d F_{1,\epsilon_m}}{\d F_{\epsilon_m}},\dots, \frac{\d F_{n,\epsilon_m}}{\d F_{\epsilon_m}}\right) \right]=\int f\left(\frac{\d F_{1,\epsilon_m}}{\d F_{\epsilon_m}},\dots, \frac{\d F_{n,\epsilon_m}}{\d F_{\epsilon_m}}\right)\d F_{\epsilon_m}.
$$
Since $\frac{\d F_{i,\epsilon_m}}{\d F_{\epsilon_m}}\in[0,n]$, and $f$ is convex hence continuous, $|f(\frac{\d F_{1,\epsilon_m}}{\d F_{\epsilon_m}},\dots, \frac{\d F_{n,\epsilon_m}}{\d F_{\epsilon_m}})|$ is bounded. Let $b$ be an upper bound of it. Because $F_{\epsilon_m}$ converges in total variation to $F$, we have
\begin{multline}\label{eq:uniform:conv}
\left|\int f\left(\frac{\d F_{1,\epsilon_m}}{\d F_{\epsilon_m}},\dots, \frac{\d F_{n,\epsilon_m}}{\d F_{\epsilon_m}}\right)\d F_{\epsilon_m}-\int f\left(\frac{\d F_{1,\epsilon_m}}{\d F_{\epsilon_m}},\dots, \frac{\d F_{n,\epsilon_m}}{\d F_{\epsilon_m}}\right)\d F\right|\\
\leq 2b\delta(F_{\epsilon_m},F)\to 0
\end{multline}
uniformly, where $\delta(\cdot, \cdot)$ is the total variation distance. Moreover, by dominated convergence, we have
\begin{equation}\label{eq:dominated:conv}
\int f\left(\frac{\d F_{1,\epsilon_m}}{\d F_{\epsilon_m}},\dots, \frac{\d F_{n,\epsilon_m}}{\d F_{\epsilon_m}}\right)\d F\to \int f\left(\frac{\d F_{1}}{\d F},\dots, \frac{\d F_{n}}{\d F}\right)\d F.
\end{equation}

(\ref{eq:uniform:conv}) and (\ref{eq:dominated:conv}) together show that
\begin{align*}
\E^{F}\left[f\left(\frac{\d F_{1}}{\d F},\dots, \frac{\d F_{n}}{\d F}\right) \right]&=\lim_{m\to\infty}\E^{F_{\epsilon_m}}\left[f\left(\frac{\d F_{1,\epsilon_m}}{\d F_{\epsilon_m}},\dots, \frac{\d F_{n,\epsilon_m}}{\d F_{\epsilon_m}}\right) \right]\\
&\leq \E^Q\left[f\left(\frac{\d Q_1}{\d Q},\dots,\frac{\d Q_n}{\d {Q}}\right)\right].
\end{align*}

 \underline{Sufficiency.} Assume that $(F_1,\dots,F_n)\lv  (Q_1,\dots,Q_n)$. By Lemma \ref{lem:lem33}, this means that
  $$\left.\left(\frac{\d F_1}{\d F},\dots, \frac{\d F_n}{\d F} \right)\right|_F \lcx \left. \left(\frac{\d Q_1}{\d Q},\dots,\frac{\d Q_n}{\d Q}\right)\right|_Q
$$ holds for $F=\frac{1}{n}\sum_{i=1}^n F_i$ and $Q=\frac{1}{n}\sum_{i=1}^n Q_i$.

By Lemma \ref{lem:lem2}, there exists a probability space $(\Omega', {\mathcal A}', Q')$ and random vectors $\mathbf Y'=(Y_1',\dots,Y_n'), \mathbf Z'=(Z_1',\dots,Z_n')$ defined on $(\Omega', {\mathcal A}', Q')$, such that
$$
(Y_1',\dots,Y_n')\laweq\left(\frac{\d Q_1}{\d Q},\dots,\frac{\d Q_n}{\d Q}\right)=:\mathbf Y=(Y_1,\dots,Y_n),
$$
$$
(Z_1',\dots,Z_n')\laweq\left(\frac{\d F_1}{\d F},\dots,\frac{\d F_n}{\d F}\right)=:\mathbf Z=(Z_1,\dots,Z_n),
$$
and
$$
\E^{Q'}[Y_i'|Z_i']=Z_i',\quad i=1,\dots,n.
$$

Given $m=0,1,\dots$, define random vector $\mathbf Y_m=(Y_{m,1},\dots,Y_{m,n})$ by
$$
Y_{m,i}=\left\{
\begin{array}{ll}
0 & \text{ if }Y_i=0\\
\exp(2^{-m}\lfloor2^m\log(Y_i)\rfloor)& \text{ otherwise}
\end{array}
\right.
$$
for $i=1,\dots,n$. Similarly we define $\mathbf Y'_m$, $\mathbf Z_m$ and $\mathbf Z'_m$ for $\mathbf Y'$, $\mathbf Z$ and $\mathbf Z'$, respectively. Note that
\begin{align*}
\E^{Q'}\left[Y_{m,i}'|Z_{m,i}'\right]&\in \left[\exp(-2^{-m})\E^{Q'}[Y_{i}'|Z_{m,i}'], \E^{Q'}[Y_{i}'|Z_{m,i}']\right]\\
&\subseteq \left[\exp(-2^{-m})Z_{m,i}', \exp(2^{-m})Z_{m,i}'\right]
\end{align*}
 for $i=1,\dots,n$.

Each of $Q_1,\dots, Q_n$ is atomless, and so is $Q$. As a result, we can divide $\Omega$ into disjoint sets $A^m_{k,j}$, where $k=(k_1,\dots,k_n)\in({\mathbb Z}\cup\{-\infty\})^n$ and $j=(j_1,\dots,j_n)\in({\mathbb Z}\cup\{-\infty\})^n$, such that $Y_{m,i}(\omega)=\exp (k_i2^{-m})$ for $\omega\in A^m_{k,j}$ and $i=1,\dots,n$,
 $$
 Q(A^m_{k,j})={Q'}(Y'_{m,i}=\exp(k_i2^{-m}), Z'_{m,i}=\exp(j_i2^{-m}), i=1,\dots,n).$$
  Here we follow the tradition that $\exp(-\infty)=0$ for ease of notation. Define random vector $\mathbf Z_m''$ on $(\Omega, {\mathcal A}, Q)$ by $Z_{m,i}''(\omega)=\exp(j_i2^{-m})$ for $\omega\in A^m_{k,j}$, then $(\mathbf Y_m,\mathbf Z''_m)|_Q\laweq (\mathbf Y_m',\mathbf Z_m')|_{Q'}$.

Let $I_d$ be the identity random variable on $(R, {\mathcal B}(\mathbb R))$. For $\ell=0,1,\dots$ and $h\in\mathbb Z$, denote by $\varphi^m_{\ell,h}(z)$ the conditional probability under $F$ of the event $I_d\in [h2^{-\ell}, (h+1)2^{-\ell})$ given $Z_m=z$:
 $$
 \varphi^m_{\ell,h}(z)=F(I_d\in [h2^{-\ell}, (h+1)2^{-\ell})|Z_m=z).
 $$
 Then for any $\ell=0,1,\dots$, $A^m_{k,j}$ can be further divided into disjoint subsets $A^m_{k,j,\ell,h}$, such that $Q(A^m_{k,j,\ell,h})$$=Q(A^m_{k,j})\varphi^m_{\ell,h}(\exp (j2^{-m}))$. Moreover, the partitions can be made such that $\{A^m_{k,j,\ell',h}\}_{h\in\mathbb Z}$ is a refinement of $\{A^m_{k,j,\ell,h}\}_{h\in\mathbb Z}$ for any $\ell'>\ell$ and any given $m,k,j$. Define $X_{m,\ell}(\omega)=h2^{-\ell}$ for $\omega\in A^m_{k,j,\ell,h}$, and $X_m=\lim_{\ell\to\infty}X_{m,\ell}$. The limit exists since it is easy to check that $X_{m,\ell}$ is increasing with respect to $\ell$. Note that $X_{m,\ell}$ is conditionally independent of $\mathbf Y_m$ given $\mathbf Z_m''$, hence $X_m$ is also conditionally independent of $\mathbf Y_m$ given $\mathbf Z''_m$.

 By construction, for any $A\in{\mathbb R}^n$, $\ell=0,1,\dots,$ and $h\in\mathbb Z$,
\begin{align}\label{eq:keep:value}
\begin{split}
&~Q(\mathbf Z''_m\in A, X_{m,\ell'}\in [h2^{-\ell},(h+1)2^{-\ell}))\\
=&~Q(\mathbf Z''_m\in A, X_{m,\ell}=h2^{-\ell})\\
=&~\sum_{\substack{k\\ j:\exp(j2^{-m})\in A}}Q(A^m_{k,j,\ell,h})\\
=&~\sum_{\substack{k \\ j:\exp(j2^{-m})\in A}}Q(A^m_{k,j})\varphi^m_{\ell,h}(\exp(j2^{-m}))\\
=&~\sum_{j:\exp(j2^{-m})\in A}Q(\mathbf Z''_m=\exp(j2^{-m}))\varphi^m_{\ell,h}(\exp(j2^{-m}))\\
=&~\sum_{j:\exp(j2^{-m})\in A}F(\mathbf Z_m=\exp(j2^{-m}))F([h2^{-\ell},(h+1)2^{-\ell})|\mathbf Z_m=\exp(j2^{-m}))\\
=&~F(\mathbf Z_m^{-1}(A)\cap [h2^{-\ell},(h+1)2^{-\ell}))
\end{split}
\end{align}
for all $\ell'\geq \ell$. Thus, $\mathbf Z_m$, restricted on interval $[h2^{-\ell},(h+1)2^{-\ell})$, has the same distribution as $\mathbf Z_m''$, restricted on set $X^{-1}_{m,\ell'}([h2^{-\ell},(h+1)2^{-\ell}))$. Note that $X^{-1}_{m,\ell'}([h2^{-\ell},(h+1)2^{-\ell}))$ is the same set for any $\ell'\geq \ell$, hence $\mathbf Z_m$ restricted on interval $[h2^{-\ell},(h+1)2^{-\ell})$ also has the same distribution as $\mathbf Z_m''$ restricted on $X^{-1}_m([h2^{-\ell},(h+1)2^{-\ell}))$ for all $m=0,1,\dots$. Because the collection of sets $\{[h2^{-\ell},(h+1)2^{-\ell})\}_{h\in{\mathbb Z},\ell=0,1,\dots}$ forms a basis for ${\mathcal B}(\mathbb R)$, $\mathbf Z_m$ restricted on any Borel set $B$ has the same distribution as $\mathbf Z_m''$ restricted on $X_m^{-1}(B)$. Therefore we conclude that $\mathbf Z''_m=\mathbf Z_m\circ X_m$ $Q$-almost surely. Moreover, by taking $A={\mathbb R}^n$ in (\ref{eq:keep:value}), it follows that $Q(X_{m,\ell'}\in [h2^{-\ell},(h+1)2^{-\ell}))=F([h2^{-\ell},(h+1)2^{-\ell}))$ for all $\ell'\geq \ell$. A similar reasoning as above then shows that $F=Q\circ X_m^{-1}$.

For any $A\in {\mathcal B}$ and any $i=1,\dots,n$,

\begin{equation}\label{eq:Qiexact}
Q_i(X_m\in A)=\int_{X_m^{-1}(A)}Y_i\d Q.
\end{equation}
It is easy to see that
\begin{equation}
\int_{X_m^{-1}(A)}Y_{m,i}\d Q\leq \int_{X_m^{-1}(A)}Y_i\d Q\leq \exp(2^{-m})\int_{X_m^{-1}(A)}Y_{m,i}\d Q.
\end{equation}

Moreover,
\begin{align*}
&\int_{X_m^{-1}(A)}Y_{m,i}\d Q\\
&=\sum_{j}Q\left(X_m\in A\big|\mathbf Z''_m=e^{j2^{-m}}\right)\sum_k e^{k_i2^{-m}}Q\left(\mathbf Y_m=e^{k2^{-m}}, \mathbf Z''_m=e^{j2^{-m}}\right)\\
&=\sum_{j}Q\left(X_m\in A\big|\mathbf Z''_m=e^{j2^{-m}}\right)Q\left(\mathbf Z''_m=e^{j2^{-m}}\right)\E^Q[Y_{m,i}|\mathbf Z''_m=e^{j2^{-m}}]\\
&=\sum_{j}Q\left(X_m\in A\big|\mathbf Z''_m=e^{j2^{-m}}\right)Q\left(\mathbf Z''_m=e^{j2^{-m}}\right)\E^{Q'}[Y'_{m,i}|\mathbf Z'_m=e^{j2^{-m}}]\\
&\geq \sum_{j}Q\left(X_m\in A\big|\mathbf Z''_m=e^{j2^{-m}}\right)Q\left(\mathbf Z''_m=e^{j2^{-m}}\right)\exp\left(j_i2^{-m}-2^{-m}\right)\\
&=\sum_{j}F\left(A\big |\mathbf Z_m=e^{j2^{-m}}\right)F\left(\mathbf Z_m=e^{j2^{-m}}\right)\exp\left(j_i2^{-m}-2^{-m}\right)\\
&\geq \exp\left(-2^{-m}\right)\sum_j\exp\left(j_i2^{-m}\right)F\left(A\cap \{\mathbf Z_m=e^{j2^{-m}}\}\right)\\
&=\exp\left(-2^{-m}\right)\int_A Z_{m,i} \d F\\
&\geq \exp\left(-2^{-m+1}\right)\int_A Z_i \d F\\
&= \exp\left(-2^{-m+1}\right)F_i(A),
\end{align*}
where the first equality holds since $X_m$ is independent of $\mathbf Y_m$ given $\mathbf Z''_m$, and the fourth equality holds because $Q\circ X_m^{-1}=F$ and $\mathbf Z_m\circ X_m=\mathbf Z_m''$. Symmetrically,
\begin{equation}\label{eq:upper:bound}
\int_{X_m^{-1}(A)}Y_{m,i}\d Q\leq \exp(2^{-m}) F_i(A).
\end{equation}

Combining (\ref{eq:Qiexact})-(\ref{eq:upper:bound}), we have
$$
Q_i(X_m\in A)\in[\exp(-2^{-m+1})F_i(A),\exp(2^{-m+1})F_i(A)].
$$
Since this holds for any $A\in{\mathcal B}(\mathbb B)$, we conclude that $Q_i\circ X_m^{-1}$ is absolutely continuous with respect to $F_i$, and ${\d Q_i\circ X_m^{-1}}/{\d F_i}\in[\exp(-2^{-m+1}), \exp(2^{-m+1})]$. It is easy to see that $D_{\text{KL}}(Q_i\circ X_m^{-1}\Vert F_i)$ converges to 0 as $m\to\infty$.
 \end{proof}

\end{document}